\documentclass[11pt]{article}
\usepackage{latexsym,amsmath,amssymb}

\addtolength{\textwidth}{4cm} 
\addtolength{\hoffset}{-1.6cm}
\addtolength{\textheight}{3cm} 
\addtolength{\voffset}{-1.5cm}
\newif\ifdviwin

\setlength{\parskip}{.25cm} %Esto es para la anchura entre párrafos

\usepackage[center]{titlesec}
\usepackage[english]{babel}
\usepackage{indentfirst}
\usepackage[mathscr]{eucal}
\usepackage{amssymb,amsmath,amsfonts}
\usepackage{fancybox,fancyhdr}
\usepackage{graphicx}
\usepackage[utf8]{inputenc}
\usepackage{float}
\usepackage{color}
\usepackage[pdftex]{hyperref}
\hypersetup{colorlinks=true,linkcolor=blue,citecolor=blue} 
\usepackage[export]{adjustbox}

\newif\ifdviwin

\dviwintrue

\setlength{\parskip}{.25cm} %Esto es para la anchura entre párrafos

\usepackage[center]{titlesec}
\usepackage[english]{babel}
\usepackage{indentfirst}
\usepackage[mathscr]{eucal}
\usepackage{amssymb,amsmath,amsfonts}
\usepackage{fancybox,fancyhdr}
\usepackage{graphicx}
\usepackage[utf8]{inputenc}
\usepackage{float}
\usepackage{color}
\usepackage[pdftex]{hyperref}
\hypersetup{colorlinks=true,linkcolor=blue,citecolor=blue} 
\usepackage[export]{adjustbox}

\newif\ifdviwin

\dviwintrue

%Definiciones

\let\infty=\infty \let\0=\emptyset  
\let\tilde=\widetilde

\let\varepsilon=\varepsilon

\def\R{\mathbb{R}}
\def\M{\mathbb{M}}
\def\H{\mathcal{H}}
\def\hh{\mathfrak{h}}
\def\hl{\mathfrak{h}_\lambda}

\def\D{\mathbb{D}}

\def\S{\mathbb{S}}
\def\m2r{\mathbb{M}^2\times\R}
\def\h2r{\mathbb{H}^2\times\R}
\def\mkr{\M^2(\kappa)\times\R}
\def\s2r{\mathbb{S}^2\times\R}

\def\Hss{\mathcal{H}\text{-}\mathrm{surfaces}}

\def\sig{\Sigma}
\def\r3{\mathbb{R}^3}

\def\te{\Theta_\varepsilon}
\def\t1{\Theta_1}
\def\tm1{\Theta_{-1}}

%Teoremas
\newtheorem{defi}{Definition}[section]
\newtheorem{teo}[defi]{Theorem}
\newtheorem{prop}[defi]{Proposition}
\newtheorem{cor}[defi]{Corollary}
\newtheorem{lem}[defi]{Lemma}

\newenvironment{proof}{\rm \trivlist \item[\hskip \labelsep{\it
		Proof}:]}{\nopagebreak \hfill $\Box$ \endtrivlist}

\newenvironment{prooft1}{\rm \trivlist \item[\hskip \labelsep{\it
		Proof of Theorem \ref{teoremasaleeje}}:]}{\nopagebreak \hfill $\Box$ \endtrivlist}
		
\newenvironment{prooft2}{\rm \trivlist \item[\hskip \labelsep{\it
		Proof of Theorem \ref{teoremafueraeje}}:]}{\nopagebreak \hfill $\Box$ \endtrivlist}

\numberwithin{equation}{section}

\begin{document}
	
	%\mbox{}\vspace{0.4cm}
	
\begin{center}	
%\rule{16cm}{1.5pt}\vspace{0.5cm}
	\renewcommand{\thefootnote}{\,}
	{\large \bf Surfaces with prescribed mean curvature in $\mathbb{H}^2\times\mathbb{R}$
	\footnote{\hspace{-.75cm}
	\emph{Mathematics Subject Classification:} 53A10, 53C42, 34C05,	34C40.\\
	\emph{Keywords}: Prescribed mean curvature surface, non-linear autonomous system, phase plane analysis.}}\\
	\vspace{0.5cm} { Antonio Bueno$^\dagger$, Irene Ortiz$^\ddagger$}\\
	%\rule{16cm}{1.5pt}
\end{center}
\vspace{.5cm}
$^\dagger$Departamento de Ciencias e Informática, Centro Universitario de la Defensa de San Javier, E-30729 Santiago de la Ribera, Spain. \\ \vspace{.3cm}
\emph{E-mail address:} antonio.bueno@cud.upct.es\\
$^\ddagger$Departamento de Ciencias e Informática, Centro Universitario de la Defensa de San Javier, E-30729 Santiago de la Ribera, Spain. \\ \vspace{.3cm}
\emph{E-mail address:} irene.ortiz@cud.upct.es
	
\begin{abstract}
In this paper we study rotational surfaces in the space $\h2r$ whose mean curvature is given as a prescribed function of their angle function. These surfaces generalize, among others, the ones of constant mean curvature and the translating solitons of the mean curvature flow. Using a phase plane analysis we construct entire rotational graphs, catenoid-type surfaces, and exhibit a classification result when the prescribed function is linear.
\end{abstract}

\section{\large Introduction}
\vspace{-.5cm}

Let us consider a function $\H\in C^1(\S^2)$. An oriented surface $\sig$ immersed into $\r3$ is said to be a surface with \emph{prescribed mean curvature} $\H$ if its mean curvature function $H_\sig$ satisfies
\begin{equation}\label{PMC}
H_\sig(p)=\H(N_p)
\end{equation}
for every point $p\in\sig$, where $N$ denotes the \emph{Gauss map} of $\sig$. For short, we say that $\sig$ is an $\H$\emph{-surface}.  Let us observe that when the function $\H$ is chosen as a constant $H_0$, the surfaces defined by Equation \eqref{PMC} are just the surfaces with constant mean curvature (CMC) equal to $H_0$. 

The definition of this class of immersed surfaces has its origins on the famous Christoffel and Minkowski problems for ovaloids \cite{Chr,Min}. The existence of prescribed mean curvature ovaloids was studied, among others, by Alexandrov, Pogorelov and Guan-Guan \cite{Ale,Pog,GuGu}, while the uniqueness in the Hopf sense has been recently achieved by Gálvez and Mira \cite{GaMi1}. In this fashion, the first author jointly with Gálvez and Mira started to develop the \emph{global theory of surfaces with prescribed mean curvature in $\R^3$}, taking as motivation the fruitful theory of CMC surfaces, see \cite{BGM1,BGM2}. The first author also proved the existence and uniqueness to the Björling problem \cite{Bue3} and obtained half space theorems for $\Hss$ \cite{Bue4}.

A relevant case of $\Hss$ appears when the function $\H$ depends only on the height of the sphere, i.e., if there exists a one dimensional function $\hh\in C^1([-1,1])$ such that $\H(x)=\hh(\langle x,e_3\rangle)$ for every $x\in \S^2$. In this case, $\H$ is called \emph{rotationally symmetric} and Equation \eqref{PMC} reads as
\begin{equation}\label{PMCrotsim}
H_\sig(p)=\hh(\langle N_p,e_3\rangle)=\hh(\nu(p)),
\end{equation}
for every point $p\in\sig$, where $\nu(p):=\langle N_p,e_3\rangle$ is the so-called \emph{angle function} of $\sig$. In this setting, the authors have obtained a classification result for rotational $\H$-hypersurfaces in $\R^{n+1}$ satisfying \eqref{PMCrotsim}, whose prescribed function $\hh$ is \emph{linear}, i.e. $\hh(y)=ay+\lambda,\ a,\lambda\in\R$, see \cite{BuOr}.

Note that for defining Equation \eqref{PMCrotsim} we only need to measure the projection of a unit normal vector field onto a Killing vector field. Consequently, surfaces obeying \eqref{PMCrotsim} can be also defined in the product spaces $\m2r$, where $\M^2$ is a complete surface, as follows:

%Note that for defining Equation \eqref{PMCrotsim} we only need the measurement of a unit vector field onto a Killing vector field. As a very particular case, surfaces obeying Equation \eqref{PMCrotsim} have been already defined in the product spaces $\m2r$ as follows:

\begin{defi}\label{defisup}
Let $\hh$ be a $C^1$ function on $[-1,1]$. An oriented surface $\sig$ immersed in $\m2r$ has \emph{prescribed mean curvature} $\hh$ if its mean curvature function $H_\sig$ satisfies
	\begin{equation}\label{PMCH2R}
		H_\sig(p)=\hh(\langle\eta_p,\partial_z\rangle),
	\end{equation}
for every point $p\in\sig$, where $\eta$ is a unit normal vector field on $\sig$ and $\partial_z$ is the unit vertical Killing vector field on $\m2r$. 
\end{defi}
Again, note that $\nu(p):=\langle\eta_p,\partial_z\rangle$ is the angle function of $\sig$.
In analogy with the Euclidean case, we will simply say that $\sig$ is an $\hh$\emph{-surface}.

Observe that if $\hh\equiv H_0\in\R$, we recover the theory of CMC surfaces in $\m2r$, which experimented an extraordinary development since Abresch and Rosenberg \cite{AbRo} defined a \emph{holomorphic quadratic differential} on them, that vanishes on rotational examples. Also, if $\hh(y)=y$, the $\hh$-surfaces arising are the translating solitons of the mean curvature flow, see \cite{Bue1,Bue2,LiMa}. For a general function $\hh\in C^1([-1,1])$ under necessary and sufficient hypothesis, the first author obtained a Delaunay-type classification result in $\m2r$ \cite{Bue5}, and a structure-type result for properly embedded $\hh$-surfaces in $\h2r$ \cite{Bue6}.

Inspired by the fruitful theory of $\Hss$ in $\r3$, the purpose of this paper is to further investigate the theory of surfaces satisfying \eqref{PMCH2R} in the product space $\h2r$. The $\hh$-surfaces studied in this paper are motivated by the examples arising in the theory of minimal surfaces and translating solitons in both $\r3$ and $\h2r$, and by $\H$-hypersurfaces in $\R^{n+1}$ where $\H\in C^1(\S^n)$ is a linear function.

The rest of the introduction is devoted to detail the organization of the paper and highlight some of the main results.

In Section \ref{sec2} we deduce the formulae that the profile curve of a rotational $\hh$-surface in $\h2r$ satisfy. The resulting ODE will be treated as a non-linear autonomous system, and its qualitative study will be carried out by developing a phase plane analysis. From the previous work \cite{BGM2} we compile the main features of the phase plane adapted to the space $\h2r$. In Corollary \ref{ejefase} we prove the existence of two unique rotational $\hh$-surfaces, $\sig_+$ and $\sig_-$, intersecting orthogonally the axis of rotation with unit normal $\pm\partial_z$, respectively.

In Section \ref{sec3} we prove in Proposition \ref{bowls} the existence of entire, strictly convex $\hh$-graphs, called $\hh$-bowls in analogy with translating solitons. In Proposition \ref{hcats} we construct properly embedded $\hh$-annuli, called $\hh$-catenoids for their resemblance to the usual minimal catenoids in $\r3$ and $\h2r$.

In Section \ref{sec4} we focus on $\hh_\lambda$-surfaces, which are defined to be those whose prescribed function is linear, i.e. $\hh_\lambda(y):=ay+\lambda,\ a,\lambda\in\R$. The relevance of $\hl$-surfaces is that they satisfy certain characterizations that are closely related with the theory of manifolds with density. For instance, $\hl$-surfaces have \emph{constant weighted mean curvature} equal to $\lambda$ with respect to the density $e^\phi$, where $\phi(x)=a\langle x,\partial_z\rangle,\ \forall x\in\h2r$. In particular, if $\lambda=0$ we recover the fact that translating solitons are weighted minimal as pointed out by Ilmanen \cite{Ilm}. Also, $\hl$-surfaces are critical points for the weighted area functional, under compactly supported variations preserving the weighted volume (see \cite{BCMR} and \cite{BuOr}).

In this section we obtain our two main results in which we achieve a classification of complete rotational $\hl$-surfaces in $\h2r$. First, we prove that we can reduce to study the case $\hl(y)=y+\lambda,\ \lambda>0$. If such $\hl$-surfaces intersect the axis of rotation we get: 

\begin{teo}\label{teoremasaleeje}
Let be $\Sigma_+$ and $\Sigma_-$ the complete, rotational $\hl$-surfaces in $\h2r$ intersecting the rotation axis with upwards and downwards orientation respectively. Then: 
	
	\begin{itemize}
		\item[1.] For $\lambda>1/2$, $\Sigma_+$ is properly embedded, simply connected and converges to the flat CMC cylinder $C_\lambda$ of radius $\arg\tanh\left(\frac{1}{2\lambda}\right)$. Moreover:
		\begin{itemize}
			\item[1.1.] If $\lambda>\sqrt{2}/2$, $\sig_+$ intersects $C_\lambda$ infinitely many times.
			
			\item[1.2.] If $\lambda=\sqrt{2}/2$, $\sig_+$ intersects $C_\lambda$ a finite number of times and is a graph outside a compact set.
			
			\item[1.3.] If $\lambda<\sqrt{2}/2$, $\sig_+$ is a strictly convex graph over the disk in $\mathbb{H}^2$ of radius $\arg\tanh\left(\frac{1}{2\lambda}\right)$.
		\end{itemize}
		
		\item[2.] For $\lambda\leq 1/2$, $\sig_+$ is an entire, strictly convex graph.
		\item[3.] For $\lambda>\sqrt{5}/2$, $\sig_-$ is properly immersed (with infinitely many self-intersections), simply connected and has unbounded distance to the rotation axis.
		\item[4.] For $\lambda\leq\sqrt{5}/2$, $\sig_-$ is an entire graph. Moreover, if $\lambda=1$, $\sig_-$ is a horizontal plane (hence minimal and flat), and if $\lambda\neq1$, $\sig_-$ has positive Gauss-Kronecker curvature.	
		
	\end{itemize}	
	\label{Classification1}	
\end{teo}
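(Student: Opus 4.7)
The plan is to recast the problem as a phase-plane analysis of the autonomous system for the profile curve of a rotational $\hl$-surface deduced in Section \ref{sec2}, and to read off the geometric behaviour of $\sig_\pm$ from the unique orbits provided by Corollary \ref{ejefase}. A first reduction normalises the prescribed function: reversing the orientation of $\sig$ replaces $ay+\lambda$ by $-ay+\lambda$ and exchanges $\sig_+$ with $\sig_-$, so up to a symmetry one may assume $a=1$ and $\lambda>0$, i.e.\ $\hl(y)=y+\lambda$. In the $(r,\nu)$-variables of Section \ref{sec2} (radial distance to the axis and angle function) the resulting autonomous system has a unique equilibrium in the interior of the line $\{\nu=0\}$ corresponding to the flat CMC cylinder $C_\lambda$ of radius $r_\lambda=\arg\tanh(1/(2\lambda))$; this equilibrium exists precisely when $\lambda>1/2$.

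For part 1, I would analyse the Jacobian of the autonomous system at $(r_\lambda,0)$. Its trace--discriminant computation places the equilibrium in one of three regimes, with $\sqrt{2}/2$ appearing as the value of $\lambda$ at which the discriminant of the characteristic polynomial vanishes: a stable focus for $\lambda>\sqrt{2}/2$, a degenerate node at $\lambda=\sqrt{2}/2$, and a stable proper node for $1/2<\lambda<\sqrt{2}/2$. Combining this local picture with the sign rules of the vector field along the lines $\{\nu=\pm 1\}$ and the coordinate axes, the orbit starting from $(0,+1)$---which represents $\sig_+$---converges to $(r_\lambda,0)$ by spiralling around it in case 1.1, by crossing $\{\nu=0\}$ finitely often and then monotonically in case 1.2, and without ever crossing $\{\nu=0\}$ in case 1.3. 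Translating these behaviours back to the profile curve yields proper embeddedness, simple connectedness, convergence to $C_\lambda$, the exact count of intersections with $C_\lambda$, and the graph/convexity statements of 1.1--1.3.

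For part 2, when $\lambda\le 1/2$ the cylinder equilibrium disappears and a barrier argument in the phase strip $\{-1<\nu<1\}$ shows that the orbit from $(0,+1)$ stays in $\{\nu>0\}$ and has $r\to\infty$, so $\sig_+$ is an entire graph, and strict convexity follows by checking positivity of both principal curvatures along the orbit (the radial one being read from $d\nu/dr$, the azimuthal one from the standard rotational formula). For $\sig_-$ one begins the orbit at $(0,-1)$ and performs the analogous analysis; the threshold $\lambda=\sqrt{5}/2$ emerges as the precise value at which the orbit first fails to remain a graph, being captured in the bounded attracting basin of $(r_\lambda,0)$ for $\lambda>\sqrt{5}/2$ (yielding a properly immersed, self-intersecting surface with unbounded $r$), and escaping monotonically for $\lambda\le\sqrt{5}/2$ (yielding an entire graph). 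The horizontal plane at $\lambda=1$ appears because $\hl(-1)=\lambda-1=0$ there, so the slice $\mathbb{H}^2\times\{c\}$ with downwards normal solves the prescribed-curvature equation; positivity of the Gauss--Kronecker curvature at the remaining values of $\lambda\le\sqrt{5}/2$ again follows from the signs of the principal curvatures along the orbit.

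The hardest step is the global phase-plane analysis in the focal regime $\lambda>\sqrt{2}/2$: one has to show that the spiralling orbit actually winds infinitely many times around $(r_\lambda,0)$ while $r$ stays uniformly bounded away from $0$ and $\infty$, so that $\sig_+$ is properly embedded and genuinely converges to $C_\lambda$ rather than drifting away. Separately, at the borderline $\lambda=\sqrt{2}/2$ one must prove that only finitely many oscillations occur before the orbit enters the graphical cone, and the analogous obstacle appears when pinning down the threshold $\sqrt{5}/2$ for the downwards-orientation family. A suitable Lyapunov functional adapted to the hyperbolic metric, combined with a monotonicity argument on an invariant sector of the phase plane, should handle both.
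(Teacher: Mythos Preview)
Your overall strategy---normalising to $\hl(y)=y+\lambda$ with $\lambda>0$, carrying out the phase-plane analysis of Section~\ref{sec2}, and linearising at the equilibrium $e_0$ to obtain the threshold $\sqrt{2}/2$---is exactly what the paper does (the linearisation is Lemma~\ref{comportamientoequilibrio}). But two substantive pieces are either wrong or missing.

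First, your description of $\Sigma_-$ for $\lambda>\sqrt{5}/2$ is mistaken. You write that the orbit from $(0,-1)$ is ``captured in the bounded attracting basin of $(r_\lambda,0)$'', yet the theorem asserts that $\Sigma_-$ has \emph{unbounded} distance to the axis. In the paper's proof $\gamma_-$ does \emph{not} converge to $e_0$: after reaching the line $\{\nu=1\}$ at a finite point it passes to the other phase plane $\Theta_{-1}$ (where $z'<0$), traverses a compact arc there, re-enters $\Theta_1$ through $\{\nu=-1\}$, and iterates this indefinitely with the $r$-coordinate increasing without bound. This alternation between $\Theta_1$ and $\Theta_{-1}$ is precisely what produces the infinitely many self-intersections of the profile curve, and it is absent from your outline. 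The value $\sqrt{5}/2$ is not a basin-of-attraction threshold; it is the largest $\lambda$ for which $2(y+\lambda)=\sqrt{1-y^2}$ has a solution $y_0\in[-1,1]$, hence (by Proposition~\ref{orbitaescapa}) the largest $\lambda$ for which an orbit can escape to $r=\infty$ with $\nu\to y_0$ and $\Sigma_-$ can be an entire graph.

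Second, to conclude that $\gamma_+$ converges to $e_0$ rather than to a periodic orbit you must exclude limit cycles in $\Theta_1$. The paper does this via a Bendixson--Dulac argument (Theorem~\ref{noorbitascerradas}) with Dulac factor $\sinh r/\sqrt{1-\nu^2}$; your ``Lyapunov functional adapted to the hyperbolic metric'' is left unspecified and would have to be produced. Relatedly, in the regime $\lambda>\sqrt{5}/2$ the paper needs an additional step---comparing the first crossings $x_+<x_-$ of $\gamma_\pm$ with $\{\nu=0\}$, ruled out in the other order by Lemma~\ref{noclosed} (no closed $\hl$-surfaces) together with the inward-spiral direction at $e_0$---to separate the fates of $\gamma_+$ (spiralling in) and $\gamma_-$ (escaping). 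This ordering argument is not present in your sketch, and without it the monotonicity-region bookkeeping alone does not force the claimed dichotomy.
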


For rotational $\hl$-surfaces in $\h2r$ non-intersecting the axis of rotation, we prove the following:

\begin{teo}\label{teoremafueraeje}
Let $\sig$ be a complete, rotational $\hl$-surface in $\h2r$ non-intersecting the rotation axis. Then, $\sig$ is properly immersed and diffeomorphic to $\S^1\times\R$. Moreover,
\begin{itemize}
\item[1.] If $\lambda>1/2$, then:
\begin{itemize}
\item[1.1.] either $\sig$ is the CMC cylinder $C_\lambda$ of radius $\arg\tanh\left(\frac{1}{2\lambda}\right)$, or
\item[1.2.] one end converges to $C_\lambda$ with the same asymptotic behavior as in item $\mathit{1}$ in Theorem \ref{Classification1}, and:
		\begin{itemize}		
		\item[a)] If $\lambda>\sqrt{5}/2$, the other end of $\sig$ has unbounded distance to the rotation axis and self-intersects infinitely many times.
		\item[b)] If $\lambda\leq \sqrt{5}/2$, the other end is a graph outside a compact set.
	\end{itemize}	
\end{itemize}
\item[2.] If $\lambda\leq 1/2$, then both ends are graphs outside compact sets.
\end{itemize}
	\label{Classification2}	
\end{teo}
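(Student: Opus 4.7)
The plan is to extend the phase-plane analysis of Section \ref{sec2}, and in particular the techniques developed in the proof of Theorem \ref{teoremasaleeje}, to orbits that stay away from the boundary component of the phase plane corresponding to the rotation axis. Since $\sig$ is rotational and complete, it is generated by a profile curve $\gamma$ whose lift is a maximal integral curve of the associated autonomous system; completeness together with avoidance of the axis forces $\gamma$ to be defined for all arc-length parameters, so the orbit is bi-infinite. Both the proper immersion and the diffeomorphism $\sig\cong\S^1\times\R$ are immediate consequences of this, combined with the $\S^1$-action of rotations.

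For $\lambda>1/2$ the autonomous system admits a unique interior equilibrium $p_\lambda$, which corresponds to the CMC cylinder $C_\lambda$ of radius $\arg\tanh(1/(2\lambda))$. If the orbit of $\gamma$ is the constant orbit $\{p_\lambda\}$, then $\sig=C_\lambda$ and we are in case 1.1. Otherwise I would first rule out any other equilibrium and any periodic orbit in the relevant strip; a Poincaré--Bendixson-type argument then forces one end of the orbit to have $\{p_\lambda\}$ as its $\omega$-limit (or $\alpha$-limit) while the other end escapes every compact set. Linearization at $p_\lambda$, performed under each of the two opposite orientations of the unit normal $\eta$, yields two companion matrices whose eigenvalues transition from complex to real at the thresholds $\lambda=\sqrt{2}/2$ and $\lambda=\sqrt{5}/2$, respectively. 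The end converging to $p_\lambda$ inherits the asymptotic description of item 1 of Theorem \ref{teoremasaleeje}: infinite spiralling around $C_\lambda$ for $\lambda>\sqrt{2}/2$, a finite number of intersections and graph-behaviour for $\lambda=\sqrt{2}/2$, and monotone approach as a graph for $\lambda<\sqrt{2}/2$. The other end, escaping $p_\lambda$ with the opposite orientation, mirrors item 3 of Theorem \ref{teoremasaleeje}: it self-intersects infinitely many times and has unbounded distance to the axis when $\lambda>\sqrt{5}/2$, giving case 1.2.a, and it becomes a graph outside a compact set when $\lambda\leq\sqrt{5}/2$, giving case 1.2.b.

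For $\lambda\leq 1/2$ no interior equilibrium exists, so both the $\omega$- and the $\alpha$-limit sets of any orbit must escape to the boundary components where the angle function $\nu$ tends to $\pm 1$. Monotonicity of $\nu$ along the orbit outside a compact region of the phase plane then implies that each of the two ends of $\sig$ is a graph, yielding case 2. The main obstacle will be the precise stability computation at $p_\lambda$: extracting the critical values $\sqrt{2}/2$ and $\sqrt{5}/2$ from the characteristic polynomials of the two linearizations, and rigorously showing that a non-constant orbit through any non-equilibrium point has exactly one end limiting to $p_\lambda$ while the other end escapes to infinity in the strip. Once these dynamical facts about the phase plane are established, the geometric conclusions about $\sig$ follow by translating the orbit picture back to the profile curve exactly as in the proof of Theorem \ref{teoremasaleeje}.
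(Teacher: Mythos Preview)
Your overall architecture is right --- phase-plane analysis, equilibrium $e_0$ for $\lambda>1/2$, no equilibrium for $\lambda\le 1/2$ --- but two concrete points in the proposal are wrong or incomplete, and they are exactly the places where the paper does real work.

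First, the threshold $\sqrt{5}/2$ does \emph{not} come from a linearization at $e_0$ ``under the opposite orientation''. There is no equilibrium in $\Theta_{-1}$: since $\lambda>0$ one has $\varepsilon\hh(0)>0$ only for $\varepsilon=1$, so the only equilibrium is the one in $\Theta_1$ and its linearization produces only the value $\sqrt{2}/2$ (Lemma \ref{comportamientoequilibrio}). The number $\sqrt{5}/2$ is the threshold at which the algebraic equation $2(y+\lambda)=\sqrt{1-y^2}$ acquires real roots $y_0^\pm$ in $[-1,1]$; geometrically it is the value at which the curve $\Gamma_1$ ceases to be a compact arc joining $(0,1)$ to $(0,-1)$ and instead develops horizontal asymptotes. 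This is what governs whether an orbit can escape to $x=\infty$ inside $\Theta_1$ (Proposition \ref{orbitaescapa}), and hence whether the non-cylinder end is a graph or loops infinitely. Your eigenvalue explanation would not recover this.

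Second, a direct Poincar\'e--Bendixson argument in $\Theta_1$ is insufficient because orbits can and do leave $\Theta_1$ through the lines $y=\pm 1$ and continue in $\Theta_{-1}$ (and back again). For $\lambda>\sqrt{5}/2$ the paper shows that the escaping end alternates between the two phase planes infinitely many times, which is precisely the mechanism producing the self-intersections in item 1.2.a; for $1/2<\lambda\le\sqrt{5}/2$ the paper distinguishes three genuinely different orbit types $\gamma_1,\gamma_2,\gamma_3$ according to where they sit relative to the asymptotes $y=y_0^\pm$. Your sketch (``one end to $p_\lambda$, the other escapes'') does not see these transitions and would not, as written, establish which end converges to $e_0$ nor why the other end is eventually graphical when $\lambda\le\sqrt{5}/2$. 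Likewise, for $\lambda\le 1/2$ the ends do not tend to $\nu=\pm 1$ but to the finite values $y_0^\pm\in(-1,1)$, again via a passage through $y=-1$ between the two phase planes. You need to track these boundary crossings explicitly, as the paper does.
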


\section{\large A phase plane analysis of rotational $\hh$-surfaces in $\h2r$}\label{sec2}
\vspace{-.5cm}

In the development of this section, we regard $\h2r$ as a submanifold of $\R^3_{-}\times\R$ endowed with the metric $+,+,-,+$. Consider a regular curve parametrized by arc-length $\alpha(s)=(x_1(s),0,x_3(s),z(s))\subset\h2r$, $x_1(s)>0$, $s\in I\subset\R$, contained in a vertical plane passing through the point $(0,0,1,0)$, and rotate it around the vertical axis $\{(0,0,1)\}\times\R$. Since $x_1^2(s)-x_3^2(s)=-1$, there exists a $C^1$ function $x(s)>0$ such that 
\[
\alpha(s)=(\sinh (x(s)),0,\cosh (x(s)),z(s)).
\]
For saving notation, we will simply denote $\alpha(s)$ by $(x(s),z(s))$. Now, note that the image of $\alpha(s)$ under this $1$-parameter group of rotations generates an immersed, rotational surface $\sig$ in $\h2r$ parametrized by 
\begin{equation}
\label{rotsurface}
\psi(s,\theta)=(\sinh(x(s))\cos\theta,\sinh(x(s))\sin\theta,\cosh (x(s)),z(s)):I\times(0,2\pi)\rightarrow\h2r,
\end{equation}
whose angle function at each point $\psi(s,\theta)$ is given by $\nu(\psi(s,\theta))=x'(s)$, $\forall s\in I$. Moreover, the principal curvatures on $\sig$ are
\begin{equation}
\label{princurv}
\kappa_1=\kappa_{\alpha}=x'z''-x''z',\hspace{.5cm} \kappa_2=\frac{z'}{\tanh x},
\end{equation}
where $\kappa_{\alpha}$ stands for the geodesic curvature of the profile curve $\alpha(s)$. Thus, the mean curvature of $\sig$ is easily related to the coordinates of $\alpha(s)$ and bearing in mind that $x'^2+z'^2=1$, we get that the function $x$ is a solution of the autonomous second order ODE
\begin{equation}\label{ODEx''}
x''=\frac{1-x'^2}{\tanh x}-2\varepsilon H_\sig\sqrt{1-x'^2},\hspace{.5cm} \varepsilon=\mathrm{sign}(z'),
\end{equation}
on every subinterval $J\subset I$ where $z'(s)\neq 0$ for all $s\in J$.

Now, assume that $\sig$ is an $\hh$-surface for some $\hh\in C^1([-1,1])$, that is, by using \eqref{PMCH2R} we get $H_{\sig}(\psi(s,\theta))=\hh(x'(s))$. Hence, after the change $y=x'$, we can rewrite \eqref{ODEx''} as the following autonomous ODE system 
\begin{equation}\label{1ordersys}
\left(\begin{array}{c}
x\\
y
\end{array}\right)'=\left(\begin{array}{c}
y\\
\displaystyle{\frac{1-y^2}{\tanh x}}-2\varepsilon\hh(y)\sqrt{1-y^2}
\end{array}\right)=:F_\varepsilon(x,y).
\end{equation}
At this point, we study this system by using a phase plane analysis as the first author did in \cite{BGM2}. Specifically, the \emph{phase plane} of \eqref{1ordersys} is defined as the half-strip $\Theta_\varepsilon:=(0,\infty)\times(-1,1)$, with coordinates $(x,y)$ denoting, respectively, the distance to the rotation axis and the angle function of $\sig$. The solutions $\gamma(s)=(x(s),y(s))$ of system \eqref{1ordersys} are called \emph{orbits}, and the \emph{equilibrium points} are the points $e_0^{\varepsilon}=(x_0^\varepsilon,y_0^\varepsilon)\in\Theta_\varepsilon$ such that $F_\varepsilon(x_0^\varepsilon,y_0^\varepsilon)=0$, which must lie in the axis $y=0$ according to system \eqref{1ordersys}.

Next, we compile some features that can be derived from the study of the phase plane. 

\begin{lem}\label{properties} 
In the above conditions, the following properties hold:
	\begin{enumerate}
		\item If $\varepsilon\hh(0)>0$, there is a unique equilibrium $e_0^{\varepsilon}=(x_0^\varepsilon,0)$ of \eqref{1ordersys} in $\Theta_\varepsilon$ given by 
		\[
		x_0^\varepsilon=\displaystyle{\arg\tanh\left(\frac{1}{2\varepsilon\hh(0)}\right)}.
		\]
		This equilibrium generates the right circular cylinder $\S^1(x_0^\varepsilon)\times\R$ of constant mean curvature $\hh(0)$ and vertical rulings.
		\item The Cauchy problem associated to \eqref{1ordersys} for the initial condition $(x_0,y_0)\in\Theta_\varepsilon$ has local existence and uniqueness. Thus, the orbits provide a foliation by regular $C^1$ curves of $\Theta_\varepsilon$ (or of $\Theta_\varepsilon-\{e_0^\varepsilon\}$, in case some $e_0^\varepsilon$ exists). 
		\item The instants $s_0\in J$ such that $\kappa_\alpha(s_0)=0$ are the ones for which $x''(s_0)=y'(s_0)=0$, i.e. those such that $(x(s_0),y(s_0))\in\Gamma_\varepsilon:=\Theta_\varepsilon\cap\{x=\Gamma_\varepsilon(y)\}$, where 
		\begin{equation}\label{gamma}
			\Gamma_\varepsilon(y)=\arg\tanh\left(\frac{\sqrt{1-y^2}}{2\varepsilon\hh(y)}\right).
		\end{equation}
		Moreover, the curve $\Gamma_{\varepsilon}$ is empty when $\varepsilon \hh(y)\leq0$.
		
		\item The axis $y=0$ and the curve $\Gamma_\varepsilon$ divide $\Theta_\varepsilon$ into connected components, called monotonicity regions, where the coordinates $x(s)$ and $y(s)$ of every orbit are strictly monotonous. Moreover, at each of these regions, we have
		\begin{equation}\label{signPC}
		\mathrm{sign}(\kappa_1)=\mathrm{sign}(-\varepsilon y'(s)), \quad 	\mathrm{sign}(\kappa_2)=\mathrm{sign}(\varepsilon).
		\end{equation}
	\end{enumerate}
\end{lem}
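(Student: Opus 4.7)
The plan is to verify the four items by direct calculation with the autonomous system \eqref{1ordersys}, the parametrization \eqref{rotsurface}, and the formulas \eqref{princurv}. Each item is essentially a bookkeeping exercise that sets up the vocabulary of the phase plane, and I expect no conceptual difficulty; the only care needed is with the domain conventions for $\arg\tanh$ and with the sign $\varepsilon$.

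For item $(1)$, I would impose $F_\varepsilon(x_0^\varepsilon,y_0^\varepsilon)=0$. The first component forces $y_0^\varepsilon=0$, and the second then reduces to $1/\tanh(x_0^\varepsilon)=2\varepsilon\hh(0)$, which is solvable for a positive $x_0^\varepsilon$ exactly when $1/(2\varepsilon\hh(0))\in(0,1)$ and yields the stated formula. The associated surface is \eqref{rotsurface} with $x(s)\equiv x_0^\varepsilon$, namely the round cylinder $\mathbb{S}^1(x_0^\varepsilon)\times\mathbb{R}$; its angle function vanishes identically, so by \eqref{PMCH2R} its (constant) mean curvature equals $\hh(0)$. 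For item $(2)$, $F_\varepsilon$ is $C^1$ on $\Theta_\varepsilon$ (smooth in $x$ since $\tanh x>0$ on $(0,\infty)$, and $C^1$ in $y$ by the regularity of $\hh$), so Picard--Lindelöf provides local existence and uniqueness, and two orbits cannot intersect; this yields the desired foliation of $\Theta_\varepsilon$ (minus any equilibrium, whose orbit reduces to a point).

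For item $(3)$, the central computation is to rewrite $\kappa_\alpha=x'z''-x''z'$ in phase coordinates. From $y=x'$ and $\varepsilon=\mathrm{sign}(z')$ we have $z'=\varepsilon\sqrt{1-y^2}$; differentiating $x'^2+z'^2=1$ yields $z''=-yy'/z'$, and substituting gives $\kappa_\alpha=-y'/z'=-\varepsilon y'/\sqrt{1-y^2}$. Thus $\kappa_\alpha(s_0)=0\iff y'(s_0)=0$, which by the second row of \eqref{1ordersys} is the equation $(1-y^2)/\tanh x=2\varepsilon\hh(y)\sqrt{1-y^2}$; isolating $x$ produces \eqref{gamma}. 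The emptiness claim is then the observation that the argument of $\arg\tanh$ in \eqref{gamma} is nonpositive (or undefined) whenever $\varepsilon\hh(y)\leq 0$, so no positive $x$ satisfies the equation there.

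For item $(4)$, I would note that the axis $\{y=0\}$ is precisely the zero set of the first component of $F_\varepsilon$ in $\Theta_\varepsilon$, while $\Gamma_\varepsilon$ is, by the preceding computation, the zero set of the second component. Since both components keep a constant sign on each connected piece of the complement of $\{y=0\}\cup\Gamma_\varepsilon$, both $x(s)$ and $y(s)$ are strictly monotonic there. The identities in \eqref{signPC} then drop out: the first from $\kappa_1=\kappa_\alpha=-\varepsilon y'/\sqrt{1-y^2}$, and the second from $\kappa_2=z'/\tanh x=\varepsilon\sqrt{1-y^2}/\tanh x$ together with $\tanh x>0$ and $|y|<1$. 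The most delicate point in the whole argument is merely ensuring that the branch convention for $\arg\tanh$ makes $\Gamma_\varepsilon$ a well-defined regular curve where it exists; I would verify this carefully but do not anticipate a genuine obstacle.
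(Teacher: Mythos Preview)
Your verification is correct and proceeds by exactly the direct computation one would expect. The paper itself does not supply a proof of this lemma; it is presented as a compilation of phase-plane properties adapted from \cite{BGM2}, so there is no alternative argument to compare against. One useful by-product of your computation in item~(1) is that the equilibrium exists precisely when $1/(2\varepsilon\hh(0))\in(0,1)$, i.e.\ when $2\varepsilon\hh(0)>1$, which is slightly sharper than the hypothesis $\varepsilon\hh(0)>0$ written in the statement; the paper tacitly uses this sharper condition later (for instance when it notes that $e_0$ exists in $\Theta_1$ if and only if $\lambda>1/2$).
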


%Since $\alpha$ is arc-length parametrized, from Equation \eqref{princurv} we get $\kappa_\alpha(s)=x''(s)/\sqrt{1-x'(s)^2}$. Hence, the instants $s_0\in I$ such that $\kappa_\alpha(s_0)=0$ are the ones for which $x''(s_0)=y'(s_0)=0$. From 						\eqref{1ordersys} we conclude that they are located in $\Gamma_\varepsilon:=\Theta_\varepsilon\cap\{x=\Gamma_\varepsilon(y)\}$, where 
%		\begin{equation}\label{gamma}
%			\Gamma_\varepsilon(y)=\arg\tanh\left(\frac{\sqrt{1-y^2}}{2\varepsilon\hh(y)}\right).
%		\end{equation}
%		Moreover, the curve $\Gamma_{\varepsilon}$ is empty when $\varepsilon \hh(y)\leq0$.

Let us focus now on the behavior of the orbits of system \eqref{1ordersys} in more detail. Firstly, note that we can view such orbits as vertical graphs $y=y(x)$ where $y\neq 0$ (i.e. where $x'\neq 0$), so the chain rule yields
	\begin{equation}\label{orbitagrafo}
		y'(s)=\frac{dy}{ds}=\frac{dy}{dx}\frac{dx}{ds}=y'(x)y(x)=\frac{1-y(x)^2}{\tanh x}-2\varepsilon\hh(y(x))\sqrt{1-y(x)^2}.
	\end{equation}
Since at each monotonicity region the sign of the quantity $y(x)y'(x)$ is constant, the behavior of the orbit passing through some $(x_0,y_0)\in\te$ is determined by the signs of $y_0$ and $\Gamma_\varepsilon(y_0)-x_0$ (whenever $\Gamma_\varepsilon(y_0)$ exists). We detail it next:

\begin{lem}\label{monotonicity}
In the above conditions, the behavior of the orbit of \eqref{1ordersys} passing through a given point $(x_0,y_0)\in\Theta_\varepsilon$ such that $\Gamma_\varepsilon(y_0)$ exists is described as follows: 
\begin{enumerate}
	\item If $x_0>\Gamma_{\varepsilon}(y_0)$ (resp. $x_0<\Gamma_{\varepsilon}(y_0)$) and $y_0>0$, then $y(x)$ is strictly decreasing (resp. increasing) at $x_0$. 
	\item If $x_0>\Gamma_{\varepsilon}(y_0)$ (resp. $x_0<\Gamma_{\varepsilon}(y_0)$) and $y_0<0$, then $y(x)$ is strictly increasing (resp. decreasing) at $x_0$.
	\item If $y_0=0$, then the orbit passing through $(x_0,0)$ is orthogonal to the $x$ axis.
	\item If $x_0=\Gamma_{\varepsilon}(y_0)$, then $y'(x_0)=0$ and $y(x)$ has a local extremum at $x_0$.
\end{enumerate}	
\end{lem}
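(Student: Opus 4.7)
The plan is to reduce all four items to a single sign computation based on a careful factorization of the right-hand side of equation \eqref{orbitagrafo}. Since $y\in(-1,1)$ implies $\sqrt{1-y^2}>0$, the right-hand side of \eqref{orbitagrafo} has the same sign as
$$G(x,y):=\frac{\sqrt{1-y^2}}{\tanh x}-2\varepsilon\hh(y),$$
and \eqref{orbitagrafo} itself can be rewritten as $y_0\,y'(x_0)=\sqrt{1-y_0^2}\,G(x_0,y_0)$. The hypothesis that $\Gamma_\varepsilon(y_0)$ exists guarantees, by Lemma \ref{properties}(3), that $\varepsilon\hh(y_0)>0$. Since $\tanh x_0$ and $2\varepsilon\hh(y_0)$ are then both strictly positive, a direct manipulation gives
$$G(x_0,y_0)\,\frac{\tanh x_0}{2\varepsilon\hh(y_0)}=\tanh\bigl(\Gamma_\varepsilon(y_0)\bigr)-\tanh(x_0),$$
and the strict monotonicity of $\tanh$ on $(0,\infty)$ then yields $\mathrm{sign}(G(x_0,y_0))=\mathrm{sign}(\Gamma_\varepsilon(y_0)-x_0)$.

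With this identity in hand, items 1 and 2 follow immediately: $\mathrm{sign}(y_0\,y'(x_0))=\mathrm{sign}(\Gamma_\varepsilon(y_0)-x_0)$, and the four sub-cases simply read off the sign of $y'(x_0)$ once one knows the signs of $y_0$ and $\Gamma_\varepsilon(y_0)-x_0$. Item 3 I would argue directly from the parametric system \eqref{1ordersys}: at a point $(x_0,0)$ the first component of $F_\varepsilon$ vanishes, while the second component equals $\frac{1}{\tanh x_0}-2\varepsilon\hh(0)$, which is nonzero unless $(x_0,0)$ is the equilibrium $e_0^\varepsilon$ of Lemma \ref{properties}(1); consequently the tangent vector of the orbit is purely vertical, i.e.\ orthogonal to the $x$-axis.

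Finally, for item 4, substituting $x_0=\Gamma_\varepsilon(y_0)$ into the factorization gives $G(x_0,y_0)=0$, hence $y'(x_0)=0$ (the statement implicitly requires $y_0\neq0$, otherwise we would be at the equilibrium and $y(x)$ would not be well defined as a graph near $x_0$). To promote this critical point to a genuine local extremum, the continuity of $\hh$ ensures that $\Gamma_\varepsilon$ is defined and continuous on a neighborhood of $y_0$, so applying items 1--2 on either side of the curve $\Gamma_\varepsilon$ shows that $y'(x)$ changes sign as $x$ crosses $x_0$. No step is expected to present a serious obstacle; the only technical care required is to track which quantities are positive at each stage, which is precisely what the existence of $\Gamma_\varepsilon(y_0)$ (equivalently $\varepsilon\hh(y_0)>0$) affords.
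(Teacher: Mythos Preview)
Your proposal is correct and follows essentially the same approach as the paper: the paper states the lemma immediately after deriving equation \eqref{orbitagrafo} and observing that the sign of $y(x)y'(x)$ is constant in each monotonicity region and determined by the signs of $y_0$ and $\Gamma_\varepsilon(y_0)-x_0$, without spelling out further details. Your argument simply makes this observation rigorous via the factorization through $G(x,y)$ and the monotonicity of $\tanh$, and supplies the extra verification for items 3 and 4 that the paper leaves implicit.
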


The following result discusses how an orbit $\gamma(s)=(x(s),y(s))\in\te$ behaves when $x(s)\rightarrow\infty$.
\begin{prop}\label{orbitaescapa}
Let $\gamma(s)=(x(s),y(s))$ be an orbit in $\te$ such that $(x(s),y(s))\rightarrow(\infty,y_0)$  when $|s|\rightarrow\infty$, with $y_0\in(-1,1)$. Then, $2\varepsilon\hh(y_0)=\sqrt{1-y_0^2}$.
\end{prop}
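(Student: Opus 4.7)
The plan is to take the limit of the second ODE in system \eqref{1ordersys} along the orbit. Since $x(s) \to \infty$ implies $\tanh x(s) \to 1$, and $y(s) \to y_0$ combined with the $C^1$ regularity of $\mathfrak{h}$ implies $\mathfrak{h}(y(s)) \to \mathfrak{h}(y_0)$, the second component of $F_\varepsilon$ evaluated along $\gamma$ satisfies
\begin{equation*}
y'(s) = \frac{1-y(s)^2}{\tanh x(s)} - 2\varepsilon\, \mathfrak{h}(y(s))\sqrt{1-y(s)^2} \;\longrightarrow\; (1-y_0^2) - 2\varepsilon\, \mathfrak{h}(y_0)\sqrt{1-y_0^2} =: L
\end{equation*}
as $|s|\to\infty$. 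So the existence of the limit $\lim y'(s) = L$ is automatic from the hypotheses.

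The next step is to argue $L = 0$. This is the key observation: if $L > 0$, then for $s$ sufficiently large we would have $y'(s) > L/2$, forcing $y(s)$ to grow without bound and contradicting $y(s)\to y_0 \in (-1,1)$; an analogous contradiction arises for $L < 0$. Hence $L=0$, which gives
\begin{equation*}
\sqrt{1-y_0^2}\left(\sqrt{1-y_0^2} - 2\varepsilon\, \mathfrak{h}(y_0)\right) = 0.
\end{equation*}
Since $y_0 \in (-1,1)$ we have $\sqrt{1-y_0^2} > 0$, so we may divide and conclude $2\varepsilon\,\mathfrak{h}(y_0) = \sqrt{1-y_0^2}$, as claimed.

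There is no real obstacle here; the only step that deserves a sentence of justification is the passage from ``$y(s)$ is bounded and $y'(s)$ has a limit'' to ``that limit is $0$''. One might also remark, as a consistency check, that the conclusion forces $\varepsilon\,\mathfrak{h}(y_0) > 0$, which is compatible with $(\infty, y_0)$ lying on the asymptote $x = \Gamma_\varepsilon(y)$ of the nullcline curve defined in \eqref{gamma}, since $\Gamma_\varepsilon(y_0) = \arg\tanh(1) = \infty$ exactly when $2\varepsilon\,\mathfrak{h}(y_0) = \sqrt{1-y_0^2}$. This geometric interpretation makes the statement essentially the assertion that orbits escaping to $x = \infty$ within $\Theta_\varepsilon$ do so tangentially to the horizontal line $y = y_0$ where $\Gamma_\varepsilon$ has its vertical asymptote.
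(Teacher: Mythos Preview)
Your proof is correct and follows essentially the same approach as the paper: both take the limit of the second equation of system \eqref{1ordersys} along the orbit and argue that the resulting limit $L$ must vanish because $y$ stays bounded. The only cosmetic difference is that the paper first reparametrizes the orbit as a graph $y=y(x)$ via \eqref{orbitagrafo} before passing to the limit, whereas you work directly with $y'(s)$; your version is in fact slightly cleaner since it avoids the need to justify that $y'(x)\to 0$ separately.
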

\begin{proof}
Suppose that $\gamma(s)\in\te$ satisfies $x(s)\rightarrow\infty$ and $y(s)\rightarrow y_0\in(-1,1)$ when $|s|\rightarrow\infty$. Then, there exists $s_0\in\R$ such that for every $s\in\R$ satisfying $|s|>|s_0|$, $\gamma(s)$ is strictly contained in some monotonicity region and so does not intersect the axis $y=0$. Thus, $\gamma(s)$ can be written as a vertical graph $y=y(x)$ satisfying $y(x)\rightarrow y_0$ and $y'(x)\rightarrow 0$ as $x\rightarrow\infty$, and substituting into Equation \eqref{orbitagrafo} we get 
\[1-y_0^2-2\varepsilon\hh(y_0)\sqrt{1-y_0^2}=0
\] 
concluding the result.
\end{proof}

%Escribir frase para introducir figura y pensar en figura. Hay que cambiar dentro de la figura el -1 de Gamma y Theta.
% 
%\begin{figure}[H]
%	\centering
%	\includegraphics[width=0.5\textwidth]{fasesh2r.pdf}  
%	\caption{An example of the phase plane $\Theta_1$ in which the curve $\Gamma_1$ has an asymptote at $y_0$. The point $e_0^1$ is the equilibrium, there are five monotonicity regions and the arrows show how an orbit behave in each of them.}
%	\label{fasesh2r}
%\end{figure}

Additionally, we highlight that the possible endpoints of an orbit are restricted as shown in \cite[Theorem 4.1, pp. 13-14]{BGM2}.
\begin{lem}
	No orbit in $\Theta_\varepsilon$ can converge to some point	of the form $(0,y)$ with $|y|<1$.
	\label{endpoints}
\end{lem}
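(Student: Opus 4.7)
The plan is to argue by contradiction. Suppose $\gamma(s)=(x(s),y(s))\subset\Theta_\varepsilon$ is an orbit converging to some point $(0,y_0)$ with $|y_0|<1$ as $s$ tends to an endpoint $c$ of its maximal interval of definition. The key observation is that, since $1-y_0^2>0$ and $\tanh x\to 0^+$ as $x\to 0^+$, the second component of the vector field $F_\varepsilon$ blows up in this limit:
\[
y'(s)=\frac{1-y(s)^2}{\tanh x(s)}-2\varepsilon\hh(y(s))\sqrt{1-y(s)^2}\longrightarrow +\infty.
\]
Hence $y'(s)>0$ in a one-sided neighborhood of $c$, so $y(s)$ is strictly monotone there and approaches $y_0$ monotonically, remaining bounded away from $\pm 1$. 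Shrinking the interval if necessary, $y(s)$ has a definite sign (for $y_0\neq 0$ this is automatic; for $y_0=0$ it follows from the monotonicity of $y$ combined with $y\to 0$), so $x'(s)=y(s)$ has a definite sign as well, and $x(s)$ is monotone near $c$. Consequently, the orbit can be written as a vertical graph $y=y(x)$ over some interval $(0,x_1)$, with $y(x)\to y_0$ as $x\to 0^+$.

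From \eqref{orbitagrafo} this graph satisfies
\[
y\,\frac{dy}{dx}=\frac{1-y^2}{\tanh x}-2\varepsilon\hh(y)\sqrt{1-y^2}.
\]
Dividing by $1-y^2>0$ and using $\tfrac{y}{1-y^2}\tfrac{dy}{dx}=-\tfrac{1}{2}\tfrac{d}{dx}\ln(1-y^2)$, I would rewrite this identity as
\[
\frac{1}{\tanh x}=-\frac{1}{2}\frac{d}{dx}\ln\bigl(1-y(x)^2\bigr)+\frac{2\varepsilon\hh(y(x))}{\sqrt{1-y(x)^2}},
\]
and integrate both sides over $(\delta,x_1)$. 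As $\delta\to 0^+$, the left-hand side diverges, since $1/\tanh t\sim 1/t$ near $t=0$ yields $\int_0^{x_1}dt/\tanh t=+\infty$. The right-hand side, however, has a finite limit: the logarithmic term tends to $\tfrac{1}{2}\ln\tfrac{1-y_0^2}{1-y(x_1)^2}$, which is finite because $|y_0|<1$, and the remaining integral stays finite because its integrand is bounded on $(0,x_1)$ (since $\hh$ is continuous on $[-1,1]$ and $\sqrt{1-y^2}$ is bounded away from $0$ near the limit). This contradiction proves the lemma.

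The only delicate step is establishing the monotonicity of $x(s)$ and $y(s)$ near $c$ so that the orbit locally projects bijectively onto some $(0,x_1)$; once this is in place, the contradiction is an elementary consequence of the fact that $1/\tanh t$ has a non-integrable singularity at $t=0$, while the companion term produced by the ODE remains integrable thanks to the $C^1$ regularity of $\hh$ and the strict inequality $|y_0|<1$.
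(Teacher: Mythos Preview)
Your argument is correct. The paper does not actually prove this lemma in the text: it only records the statement and cites \cite[Theorem 4.1, pp.\ 13--14]{BGM2} for the proof, so there is no internal argument to compare line by line. Your approach is a clean, self-contained substitute: reduce to a graph $y=y(x)$ near $x=0$ using the blow-up $y'\to+\infty$ (which forces the monotonicity of $y$, hence a definite sign of $x'=y$), then integrate the identity
\[
\frac{1}{\tanh x}=-\frac{1}{2}\frac{d}{dx}\ln\bigl(1-y(x)^2\bigr)+\frac{2\varepsilon\hh(y(x))}{\sqrt{1-y(x)^2}}
\]
over $(\delta,x_1)$ and let $\delta\to 0^+$. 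The left side diverges logarithmically while the right side stays bounded because $|y_0|<1$, yielding the desired contradiction. The one point you flag as ``delicate'' (monotonicity near $c$, in particular when $y_0=0$) is handled correctly: once $y'>0$ is established, $y$ approaches $y_0$ strictly from one side, so $y$ and hence $x'$ have a definite sign, and the graph representation is legitimate. This is precisely the kind of direct ODE argument one would expect the cited reference to contain (adapted to the $\tanh x$ factor coming from the hyperbolic geometry), and it would slot into the paper without difficulty.
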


From this result, we conclude that in case that an orbit converges to the axis $x=0$, it must do it to the points $(0,\pm 1)$.  However, we point out that the existence of an orbit with endpoint at $(0,\pm 1)$ can not be guaranteed by solving the Cauchy problem since system \eqref{1ordersys} has a singularity at the points with $x_0=0$. In this case, we can ensure the existence of such an orbit by using the work of Gálvez and Mira \cite{GaMi2} in which they solved the Dirichlet problem for radial solutions of an arbitrary \emph{fully nonlinear elliptic PDE}. Therefore, it is ensured the existence of rotational $\hh$-surfaces in $\h2r$ intersecting the rotation axis in an orthogonal way (see Section $3$ in \cite{Bue5} for further details). Furthermore, this fact derives the following result in the phase plane $\Theta_\varepsilon$.

\begin{cor}\label{ejefase}
Let be $\varepsilon,\delta\in\{-1,1\}$ such that $\varepsilon\hh(\delta)>0$ and denote $\sigma=\mathrm{sign}(\delta)$. Then, there exists a unique orbit $\gamma_\sigma$ in $\Theta_\varepsilon$ that has $(0,\delta)\in \overline{\Theta_\varepsilon}$ as an endpoint. There is no such an orbit in $\Theta_{-\varepsilon}$.
\end{cor}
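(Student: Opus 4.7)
The plan is to invoke the rotationally symmetric Dirichlet-type existence/uniqueness theorem of Gálvez and Mira \cite{GaMi2} (already summarized in the paragraph preceding the statement) and translate the resulting surface into the phase plane via the change of variables $y = x'$. Under the hypothesis $\varepsilon\hh(\delta) > 0$, which is precisely the ellipticity/admissibility condition at the axis for the normal direction $\delta\partial_z$, that theorem produces a $C^2$ rotational $\hh$-surface $\sig$, unique up to vertical translation, whose profile curve meets the rotation axis orthogonally at a point with unit normal $\delta\partial_z$. Reparametrising this curve by arc-length and setting $y = x'$ yields a $C^1$ integral curve of \eqref{1ordersys} having $(0,\delta)$ as an endpoint.

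What remains is to pin down the correct half-strip $\Theta_\varepsilon$ containing this orbit. I would do so by matching principal curvatures at the axis point $s = s_0$. By rotational symmetry $\kappa_1(s_0) = \kappa_2(s_0) = \hh(\delta)$; combining this with \eqref{princurv} (which is an indeterminate $0/0$ at the axis) and a short L'Hôpital computation produces the Taylor expansions
\[
x(s) = \delta(s-s_0) + O((s-s_0)^2), \qquad z'(s) = \delta\,\hh(\delta)\,(s-s_0) + o(s-s_0).
\]
The constraint $x(s) > 0$ forces $\mathrm{sign}(s - s_0) = \delta$, so $\mathrm{sign}(z'(s)) = \mathrm{sign}(\hh(\delta))$ for $s$ close to but distinct from $s_0$. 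Consequently the orbit lies in $\Theta_\varepsilon$ with $\varepsilon = \mathrm{sign}(\hh(\delta))$, which matches the hypothesis $\varepsilon\hh(\delta) > 0$. The same expansion simultaneously rules out any orbit in $\Theta_{-\varepsilon}$ with endpoint $(0,\delta)$, since the opposite choice of $z'$-sign near the axis would contradict the locally unique geometry of an $\hh$-surface through the axis with normal $\delta\partial_z$.

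Uniqueness of the orbit $\gamma_\sigma$ inside $\Theta_\varepsilon$ then follows from the uniqueness in \cite{GaMi2}: two such orbits would lift to two rotational $\hh$-surfaces sharing the same boundary data at the axis (common point on the axis and common unit normal $\delta\partial_z$), which must coincide after a vertical translation and therefore determine the same orbit in the phase plane.

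The main obstacle I anticipate is the delicate sign tracking near the singular endpoint $(0,\delta)$ of \eqref{1ordersys}, where both $\tanh x$ and $1 - y^2$ vanish simultaneously. The role of the admissibility condition $\varepsilon\hh(\delta) > 0$ is precisely to enforce the correct cancellation between these two vanishing terms and to select the correct half-strip; the rest is a direct translation of the Gálvez–Mira PDE result into the phase-plane language.
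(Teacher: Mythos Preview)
Your proposal is correct and follows precisely the approach the paper itself indicates: the paper does not give a standalone proof of this corollary but derives it from the paragraph immediately preceding it, where the existence of the rotational $\hh$-surface meeting the axis orthogonally is obtained from the Gálvez--Mira radial Dirichlet theorem \cite{GaMi2} (with a pointer to \cite{Bue5} for details). Your Taylor expansion of $x(s)$ and $z'(s)$ near the axis, yielding $\varepsilon=\mathrm{sign}(z')=\mathrm{sign}(\hh(\delta))$, is exactly the sign-tracking the paper leaves implicit, and it correctly identifies both the half-strip containing $\gamma_\sigma$ and the impossibility of such an orbit in $\Theta_{-\varepsilon}$.
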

The unique rotational $\hh$-surface associated to the orbit $\gamma_\sigma$, that intersects orthogonally the axis of rotation at some point having unit normal $\delta\partial_z$, will be denoted by $\sig_\sigma$.

\section{\large Existence of $\hh$-bowls and $\hh$-catenoids in $\h2r$}\label{sec3}
\vspace{-.5cm}

This section is devoted to construct examples of properly embedded rotational $\hh$-surfaces in $\h2r$, under some additional assumptions over the function $\hh\in C^1([-1,1])$. For this purpose, we follow the ideas compiled in Section 3 in \cite{BGM2}.

Firstly, we show the existence of entire, strictly convex $\hh$-graphs in $\h2r$. Recall that there exists an entire graph with CMC equal to $H_0\in\R$ if and only if $|H_0|\leq 1/2$. In addition, there exists a sphere with CMC equal to $H_0$ if and only if $|H_0|>1/2$.

Regarding $\hh$-surfaces, a necessary and sufficient condition to ensure the existence of an $\hh$-sphere in $\h2r$ is that $\hh$ must be an even function satisfying 
\begin{equation}\label{ineq}
2|\hh(y)|>\sqrt{1-y^2}
\end{equation}
for every $y\in[-1,1]$ (see Proposition $3.6$ and Theorem $4.1$ in \cite{Bue5}). Furthermore, under such hypotheses over $\hh$, we must take into account that the existence of an $\hh$-sphere in $\h2r$ forbids the existence of entire vertical $\hh$-graphs in $\h2r$, since this fact would yield to a contradiction with the maximum principle. Therefore, we construct the announced $\hh$-graphs by assuming the failure of inequality \eqref{ineq}.

\begin{prop}\label{bowls}
Let $\hh$ be a $C^1$ function on $[-1,1]$, and suppose that there exists $y_*\in [0,1]$ (resp. $y_*\in [-1,0]$) such that $2\varepsilon\hh(y_*)=\sqrt{1-{y_*}^2}$. Then, there exists an upwards-oriented (resp. downwards-oriented) entire rotational $\hh$-graph $\sig$ in $\h2r$. Moreover: 
\vspace{-.2cm}
\begin{itemize}
	\item[1.] either $\sig$ is a horizontal plane, 
	\item[2.] or $\sig$ is a strictly convex graph.
\end{itemize} 
\end{prop}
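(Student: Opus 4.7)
The plan is to realize $\sig$ as the rotational $\hh$-surface associated to the orbit $\gamma_+$ given by Corollary \ref{ejefase} that emanates from the boundary point $(0,1)\in\overline{\te}$, and then to show that this orbit can be extended as a vertical graph $y=y(x)$ over the whole half-line $x\in[0,+\infty)$, with $y(x)$ strictly decreasing from $1$ towards $y_*$. Before entering into this, I would dispose of the degenerate case $y_*=1$ (or more generally $\hh(1)=0$): the horizontal slice $\{z=z_0\}$, which is totally geodesic with angle function identically $1$, is an $\hh$-surface and yields alternative (1) directly.

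Assume now $y_*\in[0,1)$ and $\hh(1)\neq 0$. The equation $2\varepsilon\hh(y_*)=\sqrt{1-y_*^2}$ forces $\varepsilon\hh(y_*)>0$, so I would fix $\varepsilon$ with this sign (the two signs are treated symmetrically since strict convexity only cares about the common sign of the principal curvatures). With $\delta=1$, Corollary \ref{ejefase} delivers the orbit $\gamma_+\subset\te$ ending at $(0,1)$. The key geometric input is that by \eqref{gamma} the hypothesis is equivalent to $\Gamma_\varepsilon$ having the horizontal line $y=y_*$ as a vertical asymptote at infinity: $\Gamma_\varepsilon(y)\to +\infty$ as $y\to y_*^+$. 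Lemma \ref{monotonicity} shows that $\gamma_+$ immediately enters the monotonicity region where $y$ is decreasing, and the asymptote of $\Gamma_\varepsilon$ acts as a barrier preventing $\gamma_+$ from crossing the level $y=y_*$ in finite $x$. Combined with local uniqueness of orbits (Lemma \ref{properties}.2) and the exclusion of finite endpoints on $\{x=0,\,|y|<1\}$ (Lemma \ref{endpoints}), this leaves $x\to +\infty$ as the only admissible endpoint, and Proposition \ref{orbitaescapa} then pins the limit value: $y(x)\to y_*$.

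Translating back to geometry, $y=x'>0$ along the whole orbit, so the profile curve $\alpha$ projects bijectively onto $[0,+\infty)$ in the $x$-coordinate, whence $\sig$ is an entire rotational graph over $\mathbb{H}^2$. Strict convexity will follow from the sign rules \eqref{signPC}: on the single monotonicity region occupied by $\gamma_+$ the derivative $y'(s)$ has constant (negative) sign and $\varepsilon$ is constant, so both $\kappa_1$ and $\kappa_2$ are nowhere zero and share a common sign throughout, giving strict convexity. The symmetric case $y_*\in[-1,0]$ is obtained by reversing orientation. The hardest step I anticipate is the barrier argument in the second paragraph: one must exclude that $\gamma_+$ can loop around an interior equilibrium of \eqref{1ordersys} or cross the line $y=0$ before reaching $x=+\infty$, and this is precisely where the divergence of $\Gamma_\varepsilon$ at $y=y_*$ becomes decisive.
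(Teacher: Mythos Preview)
Your outline matches the paper's proof closely: horizontal plane in the degenerate case, then Corollary~\ref{ejefase} for $\gamma_+$, trap the orbit in a single monotonicity region via the asymptote of $\Gamma_\varepsilon$, invoke Proposition~\ref{orbitaescapa}, and read strict convexity off \eqref{signPC}. There is one technical oversight, however. You work throughout with the \emph{given} $y_*$, but both the application of Corollary~\ref{ejefase} and the barrier argument actually need the \emph{largest} solution. Concretely: Corollary~\ref{ejefase} requires $\varepsilon\hh(1)>0$, not $\varepsilon\hh(y_*)>0$, and these signs may disagree if $\hh$ changes sign on $(y_*,1)$; moreover, the first asymptote of $\Gamma_\varepsilon$ encountered by $\gamma_+$ as it descends from $y=1$ lies at the largest $y\in[0,1)$ with $2\varepsilon\hh(y)=\sqrt{1-y^2}$, not necessarily at $y_*$, so your claim ``$y(x)\to y_*$'' is unjustified in general.

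The paper resolves both issues simultaneously by assuming without loss of generality that $\hh(1)>0$ (the remaining sign cases are reduced to this one by changing orientation) and then replacing $y_*$ by
\[
y_0:=\max\{y\in[0,1):2\hh(y)=\sqrt{1-y^2}\}.
\]
This guarantees $2\hh(y)>\sqrt{1-y^2}$ on $(y_0,1]$, so $\Gamma_1$ restricted to that strip is a single connected arc starting at $(0,1)$ with asymptote $y=y_0$, the region $\Lambda_+=\{x>\Gamma_1(y),\,y>y_0\}$ traps $\gamma_+$ entirely, and Proposition~\ref{orbitaescapa} forces $y(x)\to y_0$. With this adjustment your argument coincides with the paper's.
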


\begin{proof}
If $y_*=1$ (resp. $y_*=-1$), then $\hh(1)=0$ (resp. $\hh(-1)=0$), and we choose the surface $\sig$ as the horizontal plane $\mathbb{H}^2\times\{t_0\}$, $t_0\in\R$, which is minimal. Then, by considering $\sig$ with upwards orientation (resp. downwards orientation), its angle function is $\nu\equiv 1$ (resp. $\nu\equiv-1$), and so from \eqref{PMCH2R} the result is trivial.

Now, suppose that $2\varepsilon\hh(y_*)=\sqrt{1-{y_*}^2}$ holds for some $y_*\in[0,1)$, assume $\hh(1)>0$ and define $y_0:=\max\{y\in[0,1):\ 2\hh(y)=\sqrt{1-y^2}\}$. Note that $y_0$ is well defined since $\hh(1)>0$, and by continuity $2\hh(y)>\sqrt{1-y^2}$ if $y\in(y_0,1]$. Hence, the horizontal graph $\Gamma_1=\Theta_1\cap\Gamma_1(y)$ defined by \eqref{gamma} has one connected component when we restrict $\Gamma_1(y)$ to $(y_0,1]$. Moreover, it satisfies $\Gamma_1(1)=0$ and $\Gamma_1(y)\rightarrow \infty$ as $y\rightarrow y_0$.

Let us take $\Lambda:=\{(x,y)\in\Theta_1:y>y_0\}$, and define $\Lambda_+:=\{(x,y)\in\Lambda: x>\Gamma_1(y)\}$ and $\Lambda_-:=\{(x,y)\in\Lambda: x<\Gamma_1(y)\}$. Note that $\Lambda\setminus\Gamma_1$ is divided into two connected components $\Lambda_+$ and $\Lambda_-$, which are precisely monotonicity regions of $\Theta_1$ because of item $4$ in Lemma \ref{properties}. Indeed, by Lemma \ref{monotonicity} each orbit $y=y(x)$ in $\Lambda_+$ (resp. $\Lambda_-$) satisfies that $y'(x)<0$ (resp. $y'(x)>0$).

Now, from Corollary \ref{ejefase} it is known that there exists a unique orbit $\gamma_+$ in $\Theta_1$ with $(0,1)$ as an endpoint. Additionaly, by the aforementioned monotonicity properties and Lemma \ref{monotonicity} it is clear that $\gamma_+$ is globally contained in $\Lambda_+$. Thus, $\gamma_+$ can be globally defined by a graph $y=f(x)$, where $f\in C^1([0,\infty))$ satisfies $f(0)=1$, $f(x)\rightarrow y_1\geq y_0$ as $x\rightarrow\infty$, and $f'(x)<0$ for all $x>0$. As a matter of fact, Proposition \ref{orbitaescapa} ensures us that $y_1=y_0$ (see Figure \ref{bowlfases}, left). 

Consequently, the $\hh$-surface $\sig_+$ generated by the orbit $\gamma_+$ is an entire rotational graph in $\h2r$. It remains to prove that $\sig_+$ is strictly convex. On the one hand, since $\gamma_+$ is totally contained in $\Theta_1$, hence $\varepsilon=1$, we deduce by \eqref{signPC} that $\kappa_2$ of $\sig_+$ is everywhere positive. On the other hand, $\gamma_+$ is totally contained in $\Lambda_+$, so \eqref{signPC} implies that $\kappa_1$ of $\sig_+$ is also everywhere positive concluding the proof of this case.

\begin{figure}[H]
	\centering
	\includegraphics[width=0.8\textwidth]{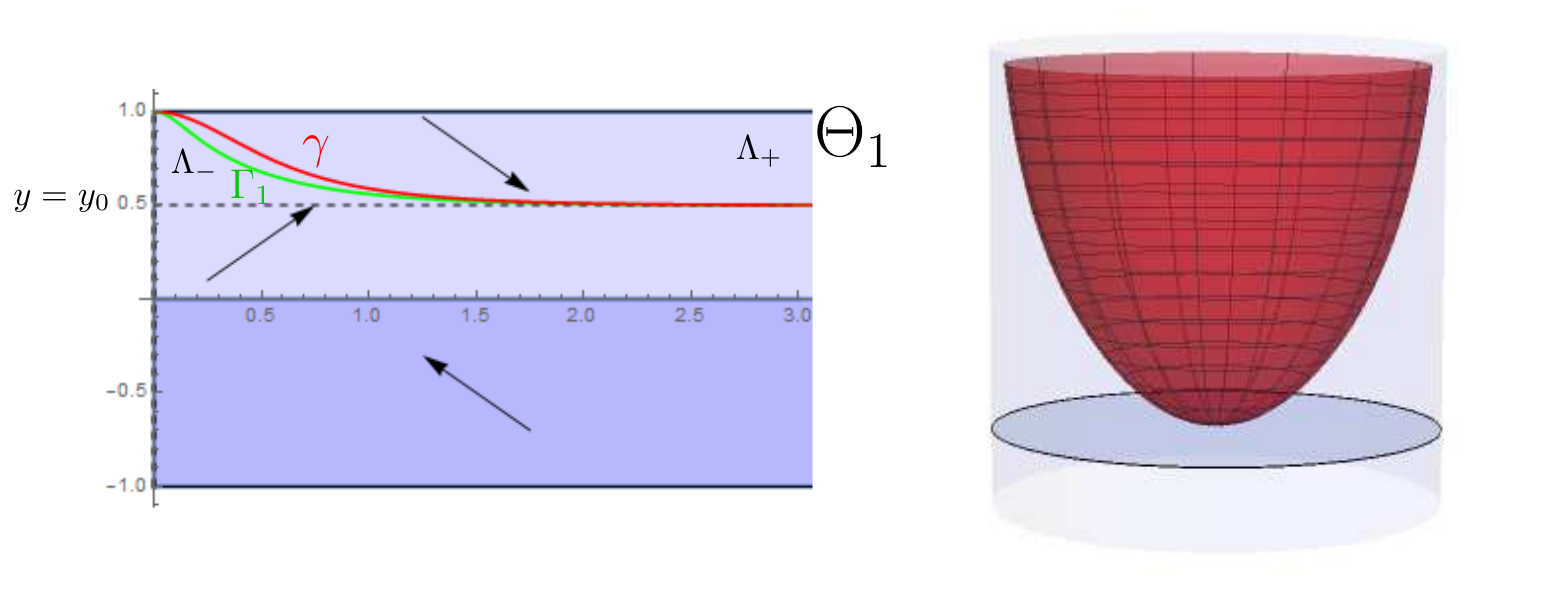}
	\caption{Left: the phase plane $\Theta_1$, the regions $\Lambda_+$ and $\Lambda_-$, the curve $\Gamma_1$ in green and the orbit $\gamma_+$ in red. Right: an $\hh$-bowl in $\h2r$. The prescribed function is $\hh(y)=\sqrt{3}(y-0.25)$.}
	\label{bowlfases}
\end{figure}

To finish, note that the case $\hh(-1)>0$, $y_*\leq0$ is treated analogously; and the two remaining cases, $\hh(1)<0$, $y_*\geq0$ and $\hh(-1)<0$, $y_*\leq0$, can be reduced to the previous ones by changing the orientation.
\end{proof}

These $\hh$-surfaces will be called $\hh$\emph{-bowls}, in analogy with the theory of self-translating solitons of the mean curvature flow (see \cite{Bue1, Bue2, LiMa}) that we extend with the previous result. See Figure \ref{bowlfases}, right, for a graphic of an $\hh$-bowl in $\h2r$.

Secondly, we study the existence of \emph{catenoid-type} rotational $\hh$-surfaces under appropiate conditions for the prescribed function $\hh$.

\begin{prop}\label{hcats}
Let $\hh$ be a  $C^1$  function on $[-1,1]$, and suppose that $\hh\leq0$ and $\hh(\pm1)=0$. Then, there exists a one-parameter family of properly embedded, rotational $\hh$-surfaces in $\h2r$ of strictly negative extrinsic curvature at every point, and diffeomorphic to $\S^1\times \R$. Each example is a bi-graph over $\mathbb{H}^2-\D_{\mathbb{H}^2}(x_0)$, where $\D_{\mathbb{H}^2}(x_0)=\{x\in\mathbb{H}^2: |x|_{\mathbb{H}^2}<x_0\}$, for some $x_0>0$.
\end{prop}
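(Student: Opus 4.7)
My plan is to realize each catenoid-type surface as the rotational surface generated by a single orbit of the system \eqref{1ordersys} in the phase plane $\Theta_1$ that crosses the axis $y=0$ at an arbitrary point $(x_0,0)$, so that the one-parameter family is indexed by $x_0>0$. Working in $\Theta_1$ (so $\varepsilon=1$) is dictated by the hypothesis $\hh\leq 0$: since $\varepsilon\hh(y)\leq 0$ throughout, Lemma \ref{properties}(1) gives no equilibrium point in $\Theta_1$ and Lemma \ref{properties}(3) gives $\Gamma_1=\emptyset$. The crucial consequence is that the second component of $F_1$ in \eqref{1ordersys} satisfies
\[
y'(s)=\frac{1-y^2}{\tanh x}-2\hh(y)\sqrt{1-y^2}>0
\]
everywhere in $\Theta_1$ (both summands are nonnegative and the first is strictly positive), so $y(s)$ is strictly increasing along every orbit.

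The next step is to fix $x_0>0$ and follow the orbit $\gamma_{x_0}(s)=(x(s),y(s))$ of \eqref{1ordersys} with $\gamma_{x_0}(0)=(x_0,0)$, whose existence and uniqueness is Lemma \ref{properties}(2). Boundedness of $y\in(-1,1)$ and of $x'=y$ guarantees global existence for all $s\in\mathbb{R}$. For $s>0$, strict monotonicity forces $y(s)>0$, hence $x$ is strictly increasing, and $y(s)\geq y(\epsilon)>0$ for any $\epsilon>0$ yields $x(s)\to\infty$ as $s\to\infty$. The bounded monotone limit $y_\infty\in(0,1]$ cannot lie in $(0,1)$: otherwise Proposition \ref{orbitaescapa} would give $2\hh(y_\infty)=\sqrt{1-y_\infty^2}>0$, contradicting $\hh\leq 0$. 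Thus $y(s)\to 1$, and by the symmetric argument $y(s)\to -1$ and $x(s)\to\infty$ as $s\to-\infty$. The hypothesis $\hh(\pm 1)=0$ is precisely what renders the remaining limits $y_\infty=\pm 1$ compatible with the asymptotic balance $2\hh(y)=\sqrt{1-y^2}$.

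With the orbit in hand, I build $\sig_{x_0}$ via \eqref{rotsurface} from the profile $\alpha(s)=(x(s),z(s))$, where $z'(s)=\sqrt{1-y(s)^2}>0$ throughout $\mathbb{R}$. Regularity of $\psi$ at the waist $s=0$ (where $\psi_s=(0,0,0,1)$ and $\psi_\theta$ remain linearly independent) shows that $\sig_{x_0}$ is diffeomorphic to $\mathbb{S}^1\times\mathbb{R}$. Because $x(s)\geq x_0$ with equality only at $s=0$ and $x$ is strictly monotone on each of the half-lines $\{s>0\}$ and $\{s<0\}$, the surface is a bi-graph over $\mathbb{H}^2\setminus\mathbb{D}_{\mathbb{H}^2}(x_0)$; the two halves lie in the disjoint half-spaces separated by the slice $z=z(0)$, giving embeddedness, while $x(s)\to\infty$ as $|s|\to\infty$ gives properness. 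Finally, \eqref{signPC} with $\varepsilon=1$ and the strict inequality $y'(s)>0$ yields $\kappa_1<0<\kappa_2$ at every point, so the extrinsic curvature $\kappa_1\kappa_2$ is strictly negative throughout.

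The main obstacle I expect is the asymptotic analysis in the second step — ruling out any interior accumulation of the orbit as $|s|\to\infty$ and identifying both endpoints precisely as $(\infty,\pm 1)$. This is where both hypotheses are used in tandem: $\hh\leq 0$ together with Proposition \ref{orbitaescapa} excludes every interior limit $y_\infty\in(-1,1)$, while $\hh(\pm 1)=0$ is the condition that renders the boundary limits $y_\infty=\pm 1$ compatible with the same balance relation.
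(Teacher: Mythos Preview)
Your approach matches the paper's: work in $\Theta_1$, observe that $\hh\le 0$ forces $\Gamma_1=\emptyset$ and hence $y'>0$ throughout, follow the orbit through $(x_0,0)$ out to $(\infty,\pm 1)$, and read off the signs of $\kappa_1,\kappa_2$ from \eqref{signPC}. The construction of $\Sigma_{x_0}$ and the bi-graph/embeddedness discussion are also the same in spirit.

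There is, however, one genuine gap, and it sits exactly where you place the hypothesis $\hh(\pm 1)=0$. You claim that this condition ``renders the remaining limits $y_\infty=\pm 1$ compatible with the asymptotic balance $2\hh(y)=\sqrt{1-y^2}$'', but Proposition~\ref{orbitaescapa} is stated only for $y_0\in(-1,1)$, and at $y_0=\pm 1$ both sides of that relation vanish automatically, so nothing is being tested there. The place where $\hh(\pm 1)=0$ is actually needed is your global-existence step, which as written is incomplete: boundedness of $y$ and of $x'=y$ does not prevent the orbit from reaching the boundary line $y=1$ (or $y=-1$) at some finite instant $s_0$, at which point the profile curve would have $z'(s_0)=0$ and the analysis would have to continue in $\Theta_{-1}$. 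The paper closes this by noting that, since $\hh(\pm 1)=0$, the curves $s\mapsto(s,\pm 1)$ are themselves solutions of \eqref{1ordersys} (they correspond to the minimal horizontal planes $\mathbb{H}^2\times\{t_0\}$), and uniqueness of the Cauchy problem then forbids any interior orbit from touching $y=\pm 1$ in finite time. Once that is in place, your argument that $|y(s)|\ge|y(\pm\epsilon)|>0$ forces $x(s)\to\infty$ directly is a clean shortcut; it replaces the paper's separate mean-value-theorem step ruling out $\gamma(s)\to(x_1,\pm 1)$ with $x_1$ finite.
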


\begin{proof}
Let $\sig(x_0)$ be the rotational $\hh$-surface generated by the arc-length parametrized curve $\alpha(s)=(x(s),z(s))$ with the following initial conditions
\[
x(0)=x_0,\; z(0)=0, \;\mathrm{and}\; z'(0)=1,\ \mathrm{with}\ x_0>0.
\]
Then, the orbit $\gamma(s)=(x(s),y(s))$ of system \eqref{1ordersys} associated to $\alpha(s)$ passes through the point $(x_0,0)$ at $s=0$. Moreover, $\gamma$ is contained in $\Theta_1$ around such a point, that is, $\varepsilon=1$. 

Observe that the curve $\Gamma_1$ given by \eqref{gamma} does not exist because of the assumption $\hh\leq 0$. Consequently, by item $4$ in Lemma \ref{properties} there are two monotonicity regions of $\Theta_1$ given by $\Lambda_+:=\{(x,y)\in\Theta_1: y>0\}$ and $\Lambda_-:=\{(x,y)\in\Theta_1:y<0\}$. Then, from \eqref{1ordersys} we know that $\gamma$ satisfies $x'>0$ and $y'>0$ in $\Lambda_+$, and $x'<0$ and $y'>0$  in $\Lambda_-$; see Figure \ref{CompletoCatenoide}, left.

\begin{figure}[h]
	\begin{center}
		\includegraphics[width=.75\textwidth]{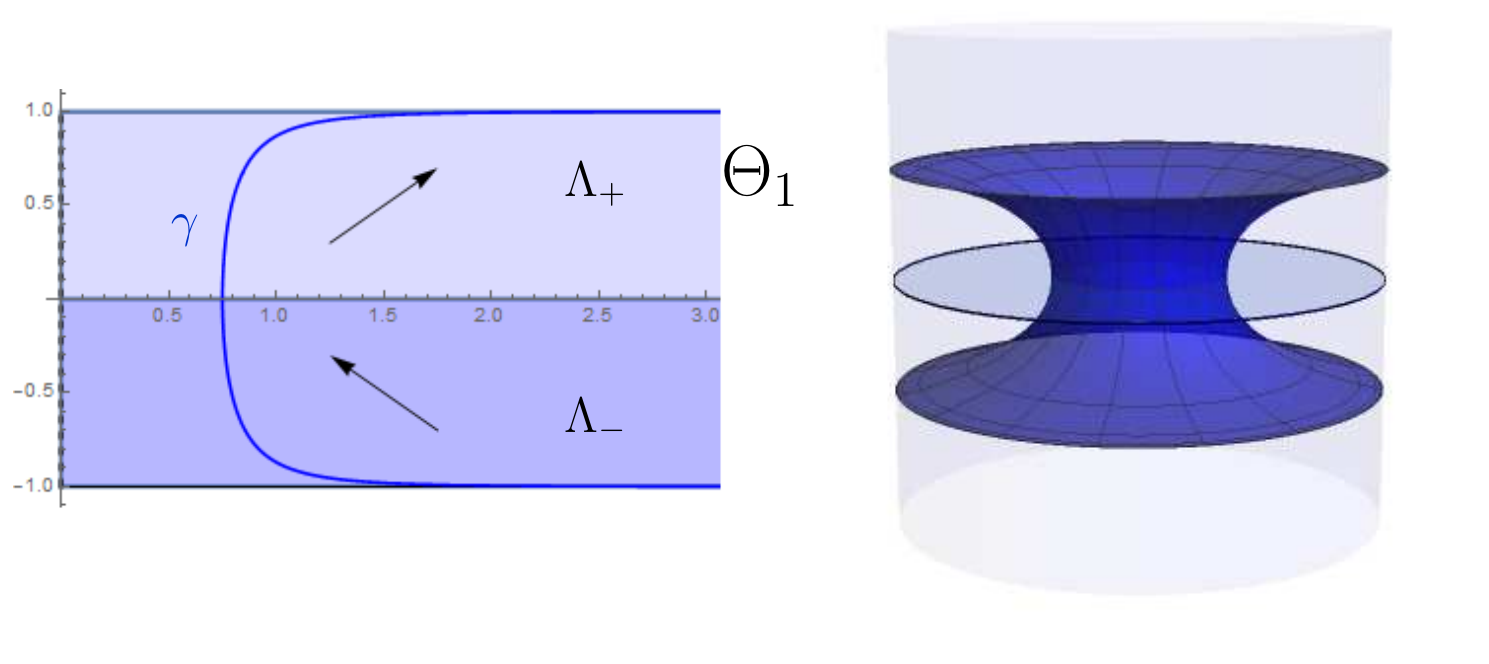}
	\end{center}
	\caption{Left: the phase plane $\Theta_1$, the monotonicity regions $\Lambda_+$ and $\Lambda_-$, and an orbit $\gamma$ in blue. Right: an $\hh$-catenoid in $\h2r$. The prescribed function is $\hh(y)=y^2-1$.}
	\label{CompletoCatenoide}
\end{figure}
Let us prove now that $\gamma$ must be a proper arc strictly contained in $\t1$ satisfying $\gamma(s)\rightarrow(\infty,\pm1)$ as $s\rightarrow\pm\infty$. First note that, the assumption $\hh\leq0$ implies that given $y_0\in(-1,1)$, the equation $2\hh(y_0)=\sqrt{1-y_0^2}$ has no solutions, so from Proposition \ref{orbitaescapa} $\gamma$ cannot satisfy $\gamma(s)\rightarrow(\infty,y_0)$ as $s\rightarrow\pm\infty$. Second, since $\hh\in C^1$ and $\hh(\pm1)=0$, from the uniqueness of the Cauchy problem associated to \eqref{1ordersys}, the curve $(s,\pm1),\ s>0,$ is a solution to \eqref{1ordersys} which corresponds to a horizontal plane $\mathbb{H}^2\times\{t_0\},\ t_0\in\R,$ endowed with $\pm\partial_z$ as unit normal, that is, $\gamma(s)$ cannot satisfy $\gamma(s_0)=(x_0,\pm1)$ for some $s_0\in\R$. Finally, it remains to show that $\gamma(s)$ cannot converge to some $(x_0,\pm1),\ x_0>0,$ as $s\rightarrow\pm\infty$. Otherwise, there would exist $s_0\in\R$ such that for $|s|>|s_0|$, $x(s)$ is a monotonous function satisfying $x(s)\rightarrow x_0$ as $|s|\rightarrow\infty$. Thus, the mean value theorem ensures us that $x'(s)=y(s)\rightarrow0$, which is a contradiction with the fact that $y(s)\rightarrow\pm1$.

%As a consequence, if $\hh(\pm1)=0$ the phase plane $\Theta_\varepsilon$ extends to the boundary lines $(x,\pm1),\ x>0$. Therefore, an orbit $\gamma(s)$ cannot satisfy $\gamma(s_0)=(x_0,\pm1)$ for some $s_0\in\R$, since it would contradict the uniqueness of the Cauchy problem. 

%By monotonicity and the aforementioned discussions, the only possibility for $\gamma$ is to satisfy $\gamma(s)\rightarrow(\infty,\pm1)$ as $s\rightarrow\pm\infty$.

Thus, $\sig(x_0)$ is a bi-graph in $\h2r$ over $\Omega(x_0):=\mathbb{H}^2-\D_{\mathbb{H}^2}(x_0)$, with the topology of $\S^{1}\times\R$. Indeed, $\sig(x_0)=\sig_1\cup\sig_2$ where both $\sig_i$ are graphs over $\Omega(x_0)$ with $\partial\sig_i=\partial\Omega(x_0)$ and $\sig_i$ meets the horizontal plane $\mathbb{H}^2\times\{0\}$ in an orthogonal way along $\partial\sig_i$ (see Figure \ref{CompletoCatenoide} right). 

It remains to prove that the extrinsic curvature of $\sig(x_0)$ is strictly negative. By \eqref{1ordersys} we get $y'(s)>0$ for all $s$, so from \eqref{signPC} we derive $\kappa_1<0$ and $\kappa_2>0$ at every $p\in\sig$.
\end{proof}

This one-parameter family of rotational $\hh$-surfaces is a generalization of the usual minimal catenoids in $\h2r$, and this is the reason for calling them $\hh$\emph{-catenoids}. As happens for the minimal catenoids, the $\hh$-catenoids are parametrized by their \emph{necksizes}, i.e. the distance of their \emph{waists} to the axis of rotation.

\section{\large Classification of rotational $\hh$-surfaces with linear prescribed mean curvature}\label{sec4}
\vspace{-.5cm}

Our aim in this section is to classify the rotational examples of the following class of $\hh$-surfaces:

\begin{defi}
An oriented surface $\sig$ immersed in $\h2r$ is an $\hl$-surface if its mean curvature function $H_\sig$ satisfies
	\begin{equation}\label{hlambda}
	H_\sig(p)=\hh_\lambda(\nu(p))=a\nu(p)+\lambda,\ \forall p\in\sig,\hspace{.5cm} a,\lambda\in\R.
	\end{equation}
\end{defi}
Note that if $a=0$, then we are studying surfaces with constant mean curvature equal to $\lambda$. Also, if $\lambda=0$ the $\hl$-surfaces are translating solitons of the mean curvature flow, see \cite{Bue1, Bue2, LiMa}. Hence, we suppose that $a,\lambda$ are not null in order to avoid these cases. After a homothety in $\h2r$ we can suppose $a=1$ in Equation \eqref{hlambda}. Moreover, if $\sig$ is an $\hl$-surface, then $\sig$ with its opposite orientation is an $\hh_{-\lambda}$-surface. Therefore, we will assume $\lambda>0$ without losing generality. In particular this implies that $\varepsilon\hh(0)>0$ if and only if $\varepsilon=1$, and consequently the equilibrium $e_0=(\arg\tanh(\frac{1}{2\varepsilon\hh(0)}),0)$ can only exist in $\t1$. The CMC vertical cylinder generated by $e_0$ will be denoted by $C_\lambda$.

First, we announce two technical results that will be useful in the sequel. The first one was originally proved by López in \cite{Lop} for surfaces in $\R^3$ whose mean curvature is given by Equation \eqref{hlambda}.
\begin{lem}\label{noclosed}
There do not exist closed $\hl$-surfaces in $\h2r$.
\end{lem}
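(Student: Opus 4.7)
The plan is to exploit two integral identities on a closed (compact without boundary) $\hl$-surface $\sig$, both of which come from the fact that $\partial_z$ is a parallel (hence Killing and divergence-free) vector field in $\h2r$. First I would decompose $\partial_z = T + \nu\,\eta$, where $T$ is the tangential projection onto $T\sig$. Since $\partial_z$ is parallel in $\h2r$ and $\mathrm{div}_\sig(\nu\eta) = -2H_\sig\,\nu$ (the usual shape-operator computation), one obtains
$$
\mathrm{div}_\sig(T) = 2H_\sig\,\nu.
$$
Equivalently, the height function $z:\sig\to\R$ satisfies $\nabla_\sig z = T$ and $\Delta_\sig z = 2H_\sig\,\nu$. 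Integrating over the closed surface yields the first identity
$$
\int_\sig H_\sig\,\nu \, dA = 0.
$$

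Next I would establish the complementary identity
$$
\int_\sig \nu \, dA = 0.
$$
Consider the $2$-form $\omega := \iota_{\partial_z}\,d\mathrm{vol}_{\h2r}$ on $\h2r$, whose pullback to $\sig$ is precisely $\nu\,dA$. Because $\partial_z$ is Killing and divergence-free, $d\omega = \mathcal{L}_{\partial_z}\,d\mathrm{vol} = 0$, so $\omega$ is closed; since $\h2r$ is contractible, $\omega$ is exact, and Stokes' theorem forces its integral over the closed surface $\sig$ to vanish. (If one prefers and $\sig$ is embedded, this reduces to the divergence theorem applied to the region bounded by $\sig$.)

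With both identities at hand, the linearity $H_\sig = \nu + \lambda$ immediately gives
$$
0 = \int_\sig H_\sig\,\nu \, dA = \int_\sig \nu^2 \, dA + \lambda \int_\sig \nu \, dA = \int_\sig \nu^2 \, dA,
$$
so $\nu\equiv 0$ on $\sig$. But compactness forces the height function $z|_\sig$ to attain a maximum at some point $p\in\sig$; at such $p$ the tangent plane is horizontal, hence $\eta_p = \pm\partial_z$ and $\nu(p) = \pm 1$, contradicting $\nu \equiv 0$.

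The only delicate point is the identity $\int_\sig \nu\,dA = 0$ in the immersed (non-embedded) case, which is handled through the contractibility of $\h2r$ as above. Everything else is a direct tensorial computation using that $\partial_z$ is parallel, together with the defining equation $H_\sig = \nu + \lambda$.
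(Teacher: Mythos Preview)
Your proof is correct and follows essentially the same line as the paper's: both integrate the identity $\Delta_\Sigma z = 2H_\Sigma\,\nu$ over the closed surface and combine it with $\int_\Sigma \nu\,dA = 0$ (coming from $\partial_z$ being divergence-free) to force $\int_\Sigma \nu^2\,dA = 0$, hence $\nu\equiv 0$, and then reach a contradiction with compactness. Your justification of $\int_\Sigma \nu\,dA = 0$ via the exact $2$-form $\iota_{\partial_z}\,d\mathrm{vol}$ is a bit more careful in the immersed case than the paper's bare appeal to the divergence theorem, and your endgame (a maximum of $z$ gives $\nu(p)=\pm 1$) is a clean variant of the paper's observation that $\nu\equiv 0$ would force $\Sigma$ into a vertical cylinder $\beta\times\R$.
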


\begin{proof}
Arguing by contradiction, suppose that $\sig$ is a closed $\hl$-surface in $\h2r$. If $h:\sig\rightarrow\R$ denotes the \emph{height function} of $\sig$, it is known that the Laplace-Beltrami operator $\Delta_\sig$ of $h$ is $\Delta_ \sig h=2H_\sig\langle\eta,\partial_z\rangle$. Since $\sig$ is an $\hl$-surface, we get
$$
\Delta_\sig h=2\langle\eta,\partial_z\rangle^2+2\lambda\langle\eta,\partial_z\rangle.
$$
We integrate this equation in $\sig$. By the divergence theorem and since $\partial\sig=\varnothing$, we have
$$
0=\int_\sig\langle\eta,\partial_z\rangle^2d\sig+\lambda\int_\sig\langle\eta,\partial_z\rangle d\sig.
$$
The second integral is zero by the divergence theorem, since the constant vector field $\partial_z$ has zero divergence. So, the first integral vanishes, that is, $\sig$ is contained in a cylindrical surface of the form $\beta\times\R,\ \beta\subset\mathbb{H}^2$ being a curve, which contradicts that $\sig$ is compact.
\end{proof}

The second result forbids the existence of closed orbits in the phase plane of system \eqref{1ordersys} for some prescribed functions $\hh$. It follows from Bendixson-Dulac theorem, a classical result which appears in most textbooks on differential equations; see e.g. \cite{ADL}. 

\begin{teo}\label{noorbitascerradas}
Let $\hh$ be a  $C^1$  function on $[-1,1]$ such that $\hh'(y)\neq0,\ \forall y\in(-1,1)$. Then, there do not exist closed orbits in $\te$.
\end{teo}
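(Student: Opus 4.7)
The plan is to apply Bendixson-Dulac directly to the system \eqref{1ordersys}, so the task reduces to finding a positive $C^1$ function $\varphi$ on the simply connected region $\Theta_\varepsilon=(0,\infty)\times(-1,1)$ such that $\mathrm{div}(\varphi\,F_\varepsilon)$ does not vanish and has a constant sign there. If such a $\varphi$ exists, then by Green's theorem a hypothetical closed orbit $\gamma\subset\Theta_\varepsilon$ bounding a region $R$ would force $\iint_R \mathrm{div}(\varphi F_\varepsilon)\,dx\,dy=0$ (because along $\gamma$ one has $P\,dy-Q\,dx=0$), contradicting the sign condition.

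The natural guess is $\varphi(x,y)=\dfrac{\sinh x}{\sqrt{1-y^2}}$, suggested by the $\tanh x$ appearing in the denominator of $F_\varepsilon$ and by the $\sqrt{1-y^2}$ factor in its second component. With this choice, after multiplying one obtains
\[
\varphi\,F_\varepsilon=\left(\frac{y\sinh x}{\sqrt{1-y^2}},\ \sqrt{1-y^2}\,\cosh x-2\varepsilon\,\hh(y)\sinh x\right),
\]
because $\frac{1-y^2}{\tanh x}\cdot\frac{\sinh x}{\sqrt{1-y^2}}=\sqrt{1-y^2}\cosh x$ and $2\varepsilon\hh(y)\sqrt{1-y^2}\cdot\frac{\sinh x}{\sqrt{1-y^2}}=2\varepsilon\hh(y)\sinh x$. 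The key calculation is then to check that the $x$ and $y$ derivatives of the two components produce a cancellation. Indeed,
\[
\partial_x\!\left(\frac{y\sinh x}{\sqrt{1-y^2}}\right)=\frac{y\cosh x}{\sqrt{1-y^2}},\qquad
\partial_y\!\left(\sqrt{1-y^2}\cosh x-2\varepsilon\hh(y)\sinh x\right)=-\frac{y\cosh x}{\sqrt{1-y^2}}-2\varepsilon\,\hh'(y)\sinh x,
\]
so the two terms $\pm y\cosh x/\sqrt{1-y^2}$ cancel and
\[
\mathrm{div}(\varphi\,F_\varepsilon)=-2\varepsilon\,\hh'(y)\sinh x.
\]

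Now the hypothesis $\hh'(y)\neq 0$ on $(-1,1)$ together with continuity of $\hh'$ forces $\hh'$ to have constant sign on $(-1,1)$, by the intermediate value theorem. Combined with $\sinh x>0$ for $x>0$, this yields that $\mathrm{div}(\varphi\,F_\varepsilon)$ is either strictly positive or strictly negative throughout $\Theta_\varepsilon$. Bendixson-Dulac then precludes closed orbits in $\Theta_\varepsilon$.

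The only nontrivial step is spotting the Dulac function; once the factor $\sinh x/\sqrt{1-y^2}$ is in hand, everything reduces to a short derivative computation, and the ``$\hh'(y)\neq 0$'' hypothesis enters exactly where it is needed, to rule out the degenerate case where the integrand could change sign inside $\Theta_\varepsilon$.
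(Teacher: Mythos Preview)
Your proof is correct and follows essentially the same approach as the paper: both use the Dulac function $\varphi(x,y)=\sinh x/\sqrt{1-y^2}$, compute $\mathrm{div}(\varphi F_\varepsilon)=-2\varepsilon\,\hh'(y)\sinh x$, and invoke Bendixson--Dulac to rule out closed orbits. Your write-up is in fact slightly more explicit, spelling out the cancellation in the divergence computation and the intermediate value argument for the constant sign of $\hh'$.
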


\begin{proof}
Let us write system \eqref{1ordersys} as
$$
\left(\begin{array}{c}
x\\
y
\end{array}\right)'=\left(\begin{array}{c}
y\\
\displaystyle{\frac{1-y^2}{\tanh x}}-2\varepsilon\hh(y)\sqrt{1-y^2}
\end{array}\right)=\left(\begin{array}{c}
P(x,y)\\
Q(x,y)
\end{array}\right),
$$
and define the function $\alpha:\te\rightarrow\R$ and the vector field $V:\te\rightarrow\te$ as
$$
\alpha(x,y)=\frac{\sinh x}{\sqrt{1-y^2}},\hspace{.5cm} V(x,y)=\alpha(x,y)(P(x,y),Q(x,y)).
$$

Arguing by contradiction, suppose that there exists some closed orbit $\overline{\gamma}$ in $\te$ and name $\Omega$ to its inner region. A simple computation yields $\mathrm{div} V=(\alpha P)_x+(\alpha Q)_y=-2\varepsilon\hh'(y)\sinh x$, which has constant sign since $x>0$ in $\te$ and $\hh'(y)\neq0$. Therefore, the divergence theorem in $\Omega$ yields
$$
0\neq\int_\Omega\mathrm{div} V=\int_\gamma\langle V,\textbf{n}_\gamma\rangle=0,
$$
where $\textbf{n}_\gamma$ is the unit normal to the curve $\gamma$. Recall that the last integral vanishes since $V$ is everywhere tangent to $\gamma$. This contradiction proves the result.
\end{proof}
In particular, the prescribed function $\hl(y)=y+\lambda,\ \lambda>0$ lies in the hypothesis of Theorem \ref{noorbitascerradas}, hence the phase plane $\te$ of system \eqref{1ordersys} for $\hl$-surfaces does not have closed orbits.

Now, suppose that $\sig$ is a rotational $\hl$-surface generated by an arc-length parametrized curve $\alpha(s)=(x(s),z(s))$. Then, \eqref{PMCH2R} and \eqref{princurv} yields 
\begin{equation}\label{CurvMed}
2H_\sig=2(x'+\lambda)=x'z''-x''z'+\frac{z'}{\tanh x}.
\end{equation}  

Our first goal is to study the structure of the orbits around $e_0=(\arg\tanh(\frac{1}{2\lambda}),0)$. Recall that $e_0$ exists if and only if $\lambda>1/2$; otherwise, $\arg\tanh$ is not well defined. The \emph{linearized} system of \eqref{1ordersys} at $e_0$ is given by
\begin{equation}%\label{linearized}
\displaystyle{\left(\begin{matrix}
0&1\\
1-4\lambda^2& -2
\end{matrix}\right)},
\end{equation}
whose eigenvalues are 
$$
\displaystyle{\mu_1=-1+\sqrt{2-4\lambda^2},\hspace{1cm}\text{and} \hspace{1cm} \mu_2=-1-\sqrt{2-4\lambda^2}}.
$$ 
From standard theory of non-linear autonomous systems we derive:
\begin{lem}\label{comportamientoequilibrio} In the above conditions, we have:
\begin{itemize}
	\item If $\displaystyle{\lambda>\frac{\sqrt{2}}{2}}$, then $\mu_1$ and $\mu_2$ are complex conjugate with negative real part. Thus, $e_0$ has an \emph{inward spiral} structure, and every orbit close enough to $e_0$ converges asymptotically to it spiraling around infinitely many times.
	
	\item If $\displaystyle{\lambda=\frac{\sqrt{2}}{2}}$, then $\mu_1=\mu_2=-1$. Thus, $e_0$ is an asymptotically stable improper node, and every orbit close enough to $e_0$ converges asymptotically to it, maybe spiraling around a finite number of times.
	
	\item If $\displaystyle{0<\lambda<\frac{\sqrt{2}}{2}}$, then $\mu_1$ and $\mu_2$ are different and real. Thus, $e_0$ is an asymptotically stable node and has a \emph{sink} structure, hence every orbit close enough to $e_0$ converges asymptotically to it \emph{directly}, i.e. without spiraling around.
\end{itemize}
\end{lem}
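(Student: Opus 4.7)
The plan is to verify the linearization stated immediately before the lemma, read off its eigenvalues as functions of $\lambda$, and then invoke the Hartman--Grobman theorem to transfer the qualitative behavior from the linear to the nonlinear system \eqref{1ordersys}. First I would compute the Jacobian of $F_1(x,y)=\bigl(y,\;(1-y^2)/\tanh x-2(y+\lambda)\sqrt{1-y^2}\bigr)$ at $e_0=(x_0,0)$ with $x_0=\arg\tanh(1/(2\lambda))$. Using $\tanh x_0=1/(2\lambda)$, which implies $\sinh^{-2}x_0=4\lambda^2-1$, a direct differentiation reproduces the matrix
$$DF_1(e_0)=\begin{pmatrix} 0 & 1 \\ 1-4\lambda^2 & -2 \end{pmatrix},$$
so the characteristic equation $\mu^2+2\mu+(4\lambda^2-1)=0$ yields $\mu_{1,2}=-1\pm\sqrt{2-4\lambda^2}$, as claimed.

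Next I would split the discussion by the sign of the discriminant $2-4\lambda^2$. When $\lambda>\sqrt{2}/2$, the eigenvalues are complex conjugate with real part $-1<0$, so the linearized phase portrait is a stable focus spiraling inward. When $\lambda=\sqrt{2}/2$, both eigenvalues collapse to $-1$; since the $(1,2)$-entry of $DF_1(e_0)$ equals $1$, the matrix is not a scalar multiple of the identity, and hence its Jordan form is a nontrivial $2\times 2$ block, so the linearization is an asymptotically stable improper node. Finally, when $1/2<\lambda<\sqrt{2}/2$, both eigenvalues are real, distinct and negative (because $\sqrt{2-4\lambda^2}<1$), producing a stable node of sink type.

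In every one of these cases the eigenvalues have strictly negative real part, so $e_0$ is a hyperbolic equilibrium. I would then invoke the Hartman--Grobman theorem (the textbook version, e.g.\ from \cite{ADL}) to obtain a local topological conjugacy between the flow of \eqref{1ordersys} near $e_0$ and its linearization. This conjugacy transports verbatim the three behaviors stated in the lemma (inward spiral, asymptotic approach that may include a finite number of turns, and direct sink-type convergence) from the linear system to the nonlinear one.

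The only mildly delicate point I anticipate is the borderline case $\lambda=\sqrt{2}/2$: here smooth linearization can fail because of the repeated eigenvalue, so one cannot simply conjugate the nonlinear flow to the linear one by a $C^1$ change of coordinates. However, hyperbolicity still holds ($\mathrm{Re}(\mu_{1,2})=-1\neq 0$), and the topological equivalence provided by Hartman--Grobman is enough to preserve the qualitative ``improper node'' picture, i.e.\ asymptotic convergence to $e_0$ with at most finitely many loops around it; no sharper classical theorem is needed.
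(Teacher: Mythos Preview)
Your approach is exactly what the paper does: it writes down the linearized matrix, computes $\mu_{1,2}=-1\pm\sqrt{2-4\lambda^2}$, and then simply says ``From standard theory of non-linear autonomous systems we derive'' the lemma, with no further argument. So at the level of strategy there is nothing to compare --- you have supplied the details the paper omits.

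There is, however, one overclaim in your write-up that you should correct. The Hartman--Grobman theorem only gives a $C^0$ conjugacy, and under a mere homeomorphism every planar hyperbolic sink looks the same: a stable focus and a stable node are topologically indistinguishable. So the sentence ``This conjugacy transports verbatim the three behaviors\ldots'' is not accurate; Hartman--Grobman alone cannot separate ``spiraling infinitely many times'' from ``converging directly''. To justify the trichotomy one needs something sharper: either Hartman's $C^1$ linearization theorem for planar $C^2$ systems (which does distinguish foci from nodes when the eigenvalues are real and distinct or complex), or a direct argument in polar coordinates showing that the angular variable is eventually monotone when the eigenvalues are nonreal. The paper hides this under the phrase ``standard theory'' and the reference \cite{ADL}; if you want your proof to stand on its own, replace the blanket appeal to Hartman--Grobman by the appropriate sharper statement, and keep your correct observation that the borderline case $\lambda=\sqrt{2}/2$ (repeated eigenvalue, possible resonance) is the one where the classification is genuinely less rigid.
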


Now we stand in position to prove Theorem \ref{teoremasaleeje}.

\begin{prooft1}
Note that the behavior of the orbits in each phase plane $\Theta_\varepsilon$ depends on
 the curve $\Gamma_\varepsilon$ and the monotonicity regions generated by it. Consequently, we analyze three different cases for $\lambda$: $\lambda>\sqrt{5}/2$, $\lambda=\sqrt{5}/2$ and $0<\lambda<\sqrt{5}/2$.

{\underline{Case $\lambda>\sqrt{5}/2$}}

Let us assume $\lambda>\sqrt{5}/2$. For $\varepsilon=1$, the curve $\Gamma_1$ given by \eqref{gamma} is a compact connected arc in $\Theta_1$ joining $(0,1)$ and $(0,-1)$, whereas for $\varepsilon=-1$, since $\hh$ is positive, the curve $\Gamma_{-1}$ does not exist in $\Theta_{-1}$. As a matter of fact, by item $4$ in Lemma \ref{properties} we know that there are four monotonicity regions in $\Theta_1$ which will be denoted by $\Lambda_1,\dots,\Lambda_4$, and there are only two monotonicity regions in $\Theta_{-1}$ which will be denoted by $\Lambda_+$ and $\Lambda_-$. See Figure \ref{PlanosFasesCaso1}. 

\begin{figure}[H]
	\centering
	\includegraphics[width=.8\textwidth]{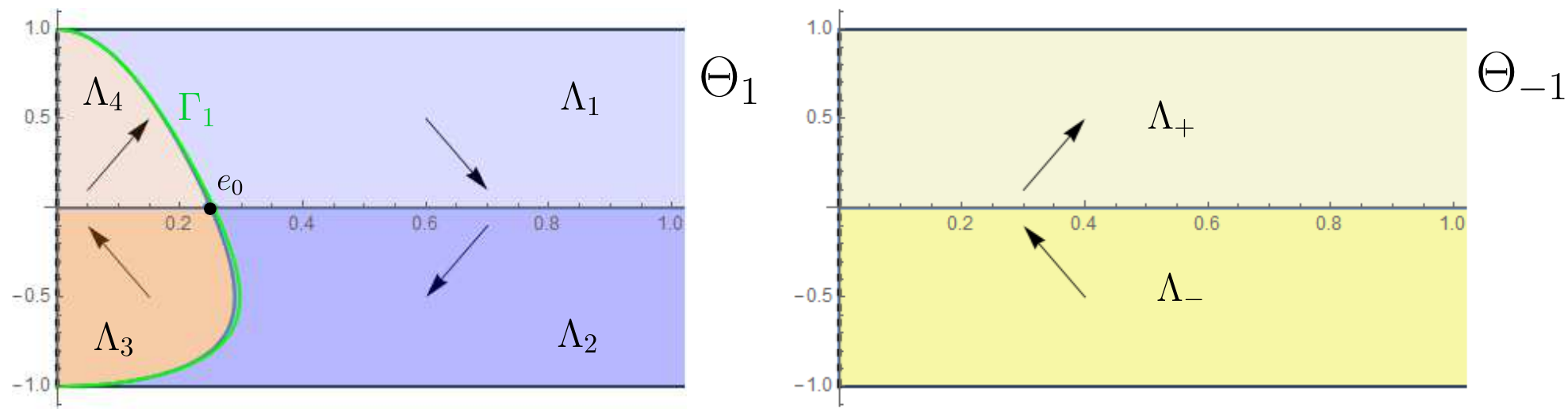}
	\caption{The phase planes $\Theta_1$ and $\Theta_{-1}$ for $\lambda>\sqrt{5}/2$ with their monotonicity regions and the direction of the motion of the orbits at each of them.}
	\label{PlanosFasesCaso1}
\end{figure}

Now, by using Corollary \ref{ejefase} it is clear that there exists a unique orbit $\gamma_+$ (resp. $\gamma_-$) in $\Theta_1$ with $(0,1)$ (resp. $(0,-1)$) as an endpoint.

On the one hand, let us study the behavior of $\gamma_+$. Firstly, we can suppose that such an orbit satisfies $\gamma_+(0)=(0,1)$ in $\Theta_1$, i.e., it generates an arc-length parametrized curve $\alpha_+(s)$ intersecting orthogonally the rotation axis with upwards oriented unit normal at $s=0$ . Because of the monotonicity properties, $\gamma_+(s)$ is strictly contained in the region $\Lambda_1$ for $s>0$ small enough. However, the orbit $\gamma_+$ cannot stay forever in $\Lambda_1$, otherwise $\gamma_+$ would be globally defined by a graph $y=f(x)$ such that 
\[
f(0)=1,\quad f'(x)<0\;\; \forall x>0\quad  \textrm{and}\quad \lim\limits_{x\rightarrow\infty}f(x)=c\in[0,1).
\]
This contradicts Proposition \ref{orbitaescapa} since $\lambda>\sqrt{5}/2$ and hence $2\hl(y)=2(y+\lambda)>\sqrt{1-y^2}$. Thus, $\gamma_+(s)$ intersects the axis $y=0$ in an orthogonal way at a point $(x_+,0)$ with $x_+>\arg\tanh\left(\frac{1}{2\lambda}\right)$ at some finite instant $s_+>0$.

On the other hand, for the orbit $\gamma_-$ we assume that $\gamma_-(0)=(0,-1)$ in $\Theta_1$, that is, it generates an arc-length parametrized curve $\alpha_-(s)$ intersecting orthogonally the axis of rotation with downwards oriented unit normal at $s=0$. By an analogous reasoning, we can assert that $\gamma_-(s)$ intersects $y=0$ orthogonally at $(x_-,0)$ with $x_->\arg\tanh\left(\frac{1}{2\lambda}\right)$ at some finite instant $s_-<0$.

Now, we prove that $x_+<x_-$ arguing by contradiction. First, see that if $x_+=x_-=\bar{x}$, by uniqueness of the Cauchy problem the orbits $\gamma_+$ and $\gamma_-$ could be smoothly glued together constructing a larger orbit $\bar\gamma$ which would be a compact arc joining the points $(0,1)$, $(\bar{x},0)$ and $(0,-1)$ and so the rotational $\hl$-surface generated would be a rotational $\hh$-sphere, which is impossible because of Lemma \ref{noclosed}. Additionally, if $x_+>x_-$, it would mean that $\gamma_-$ intersect the axis $y=0$ at the left-hand side of $\gamma_+$, and consequently the only possibility for $\gamma_-$ would be to enter the region $\Lambda_1$, later $\Lambda_4$ and after that $\Lambda_3$. In any case, $\gamma_-$ cannot converge to any point $(0,y),\ |y|<1$ in virtue of Lemma \ref{endpoints}. Repeating this process, and since $\gamma_-$ cannot self-intersect nor converge to a closed orbit by Theorem \ref{noorbitascerradas}, $\gamma_-$ finishes converging asymptotically to the equilibrium $e_0$ as $s\rightarrow-\infty$, spiriling around infinitely many times. This is a contradiction with the inward spiral structure of $e_0$, since this orbit would tend to escape from $e_0$ when $s$ increases. See Figure \ref{PlanosFasesCaso1orb}.

\begin{figure}[H]
	\centering
	\includegraphics[width=.4\textwidth]{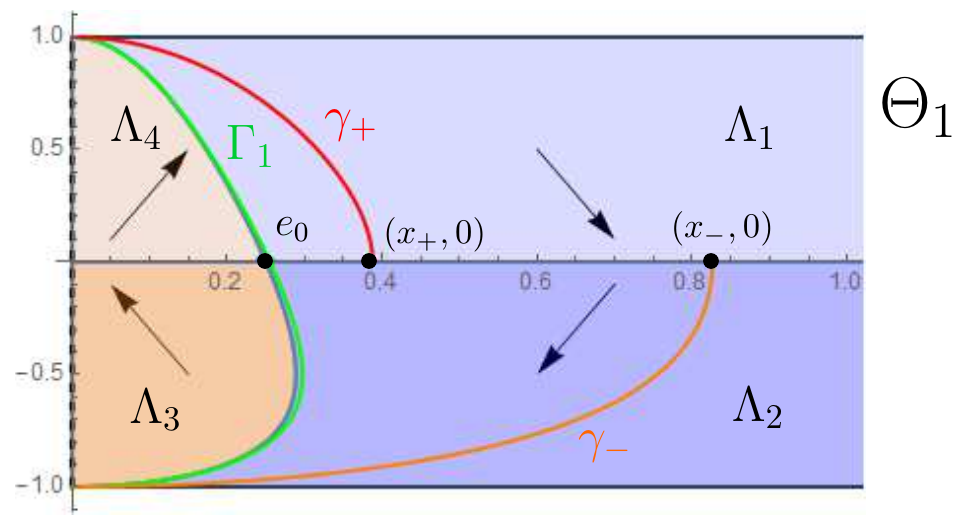}
	\caption{The phase plane $\Theta_1$ for $\lambda>\sqrt{5/2}$ with the configurarion of the orbits $\gamma_+$ and $\gamma_-$, plotted in red and orange, respectively, until they intersect the axis $y=0$.}
	\label{PlanosFasesCaso1orb}
\end{figure}
 
Let us continue by analyzing the global behavior of both orbits. Firstly, when $\gamma_+$ passes through $(x_+,0)$, it enters to $\Lambda_2$ but cannot intersect $\gamma_-$, so $\gamma_+$ has to enter to $\Lambda_3$. After that, due to the monotonicity properties and Lemma \ref{endpoints} we deduce that $\gamma_+$ has to enter to $\Lambda_4$ and intersect $\Gamma_1$. Since $\gamma_+$ cannot self-intersect nor converge to a limit closed orbit in $\t1$ in virtue of Theorem \ref{noorbitascerradas}, the only possibility for $\gamma_+$ is to repeat this behavior and eventually converge asymptotically to the equilibrium $e_0$ (see the plot of $\gamma_+$ in Figure \ref{PlanosFasesCaso1orb2} top-left). Furthermore, since $\lambda>\sqrt{2}/2$, $\gamma_+$ spirals around $e_0$ infinitely many times.

In this way, $\gamma_+$ generates the curve $\alpha_+(s)=(x_+(s),z_+(s))$ satisfying: the $x_+(s)-$coordinate is bounded by the value $x_+$ and converges to $\arg\tanh\left(\frac{1}{2\lambda}\right)$; and the $z_+(s)$-coordinate is strictly increasing since $\gamma_+\subset\Theta_1$, hence $z'_+(s)>0$. Then, $\alpha_+(s)$ is an embedded curve that converges to the line $x=\arg\tanh\left(\frac{1}{2\lambda}\right)$ intersecting it infinitely many times. Therefore, after rotating such a curve around the rotation axis, we derive that the generated surface $\Sigma_+$ is a properly embedded, simply connected $\hl$-surface that converges to the CMC cylinder $C_\lambda$ intersecting it infinitely many times. See $\Sigma_+$ in Figure \ref{PlanosFasesCaso1orb2} top-right.

Now we focus on $\gamma_-$, which intersects the axis $y=0$ at the point $\gamma_-(s_-)=(x_-,0)$ at some finite instant $s_-<0$. So, when the parameter $s<s_-$ decreases, $\gamma_-$ enters to $\Lambda_1$. Bearing in mind that $\gamma_+$ and $\gamma_-$ cannot intersect, from the monotonicity properties we deduce that $\gamma_-$ has as endpoint $\gamma_-(s_1)=(x_1,1)$ with $0<x_1<x_-$ and $s_1<s_-$. Then, $\gamma_-$ generates the curve $\alpha_-(s)=(x_-(s),z_-(s))$ satisfying that: $x_-(s_1)=x_1$ and $x_-'(s_1)=1$, so from \eqref{CurvMed} we get $z_-''(s_1)>0$, i.e., the height of $\alpha_-$ reaches a minimum at $s_1$. If we name $\sig_-$ to the $\hl$-surface associated to $\gamma_-$ and generated by rotating $\alpha_-(s)$, the image of the points $\alpha_-(s_1)$ under such rotation corresponds to points on the boundary of $\sig_-$ having unit normal $\partial_z$.

Therefore, for $s<s_1$, $s$ close enough to $s_1$, the height function $z_-(s)$ of $\alpha_-(s)$ is decreasing, i.e. $z_-'(s)<0$. Consequently, $\alpha_-(s)$ for $s<s_1$ close enough to $s_1$ generates an orbit in $\Theta_{-1}$ (since $\varepsilon=\mathrm{sign}(z'_-)=-1$), which will be named $\gamma_-$ for saving notation. Hence, the orbit $\gamma_-(s)$ \emph{continues} from $\Theta_1$ to $\Theta_{-1}$ as $s$ decreases from $s=s_1$. At this point, the continuation of $\gamma_-$ between the phase planes $\Theta_{\pm1}$ has to be understood as the extension of $\sig_-$ by solving the Cauchy problem for rotational vertical $\hl$-graphs having the same vertical unit normal. 

Hence, $\gamma_-(s)$ belongs to $\Theta_{-1}$ for $s<s_1$ close enough to $s_1$ and lies in the region $\Lambda_+$. Once again, by monotonicity, $\gamma_-$ must intersect the axis $y=0$ orthogonally and enter to $\Lambda_-$. As $\gamma_-$ cannot stay forever in $\Lambda_-$ with $x_-(s)\rightarrow\infty$ for $s\rightarrow-\infty$, we derive that there exists $s_2<s_1$ such that $\gamma_-(s_2)=(x_2,-1)$ (see the plot of $\gamma_-$ in Figure \ref{PlanosFasesCaso1orb2} left). Repeating this process indefinitely, we construct a complete, arc-length parametrized curve $\alpha_-(s)$ with infinitely many self-intersections, whose height function increases and decreases until reaching the rotation axis. Hence, the generated rotational $\hl$-surface $\Sigma_-$ is properly immersed (with self-intersections) and simply connected. See Figure \ref{PlanosFasesCaso1orb2}, bottom right.

 \begin{figure}[H]
 	\centering
 	\includegraphics[width=.7\textwidth]{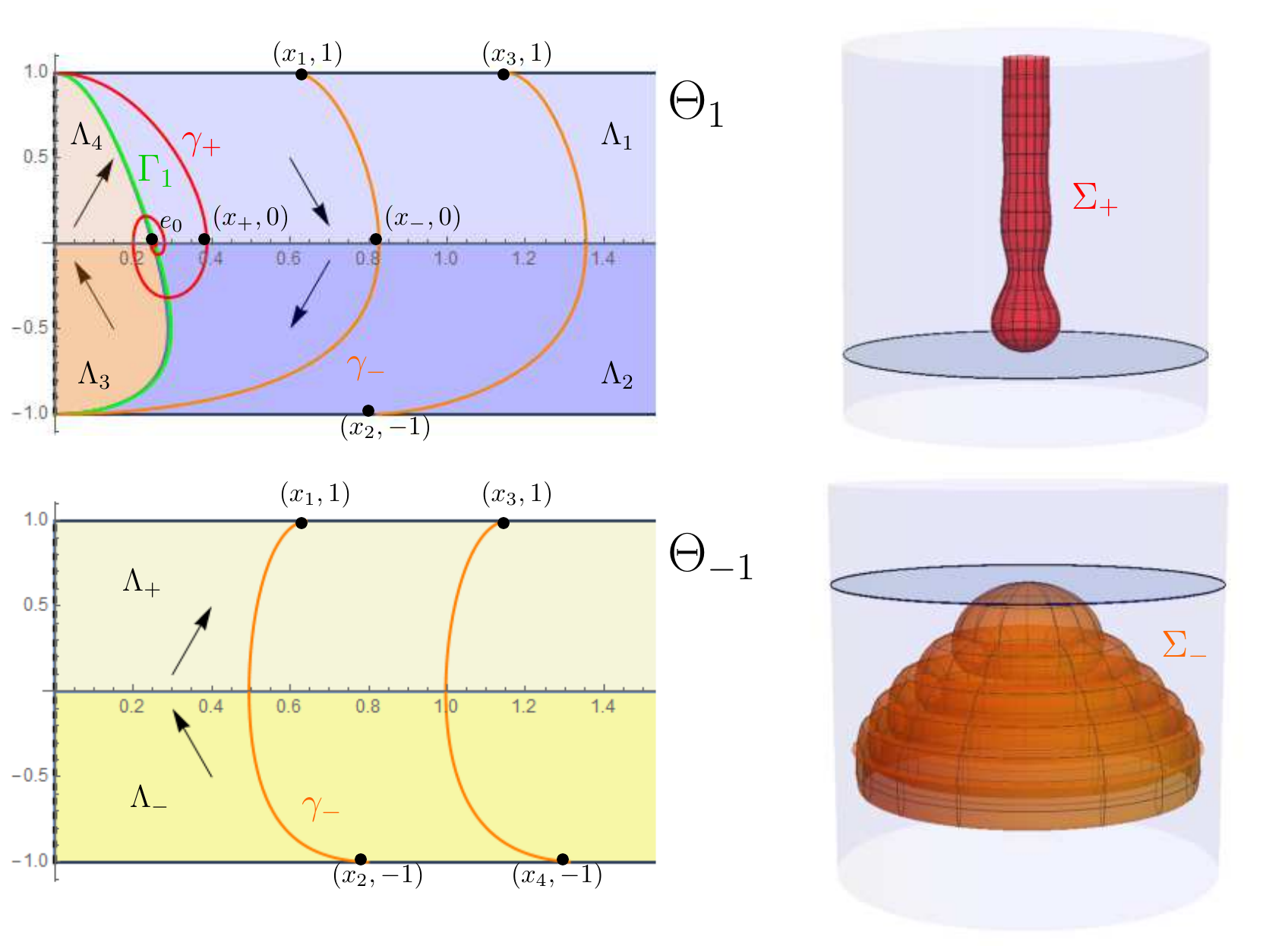}
 	\caption{Left: the phase planes $\Theta_1$ and $\Theta_{-1}$ for $\lambda>\sqrt{5}/2$ and the orbits $\gamma_+$ and $\gamma_-$, plotted in red and orange, respectively. Right: the corresponding rotational $\hl$-surfaces $\Sigma_+$ and $\Sigma_-$.}
 	\label{PlanosFasesCaso1orb2}
 \end{figure}
 
% \begin{figure}[H]
% 	\centering
% 	\includegraphics[width=.7\textwidth]{PlanoFasesCaso1orb2.pdf}
% 	\caption{Left: the phase planes $\Theta_1$ and $\Theta_{-1}$ and the orbits $\gamma_+$ and $\gamma_-$, plotted in red and orange, respectively. Right: the corresponding rotational $\hl$-surfaces $\Sigma_+$ and $\Sigma_-$.}
% 	\label{PlanosFasesCaso1orb2}
% \end{figure}

{\underline{Case $\lambda=\sqrt{5}/2$}}

Assume that $\lambda=\sqrt{5}/2$. For $\varepsilon=1$, $\Gamma_1$ is formed by two connected arcs $\Gamma_1^\pm$, each of them having the point $(0,\pm1)$ as endpoint respectively, and both having the line $y=-2/\sqrt{5}$ as an asymptote. From item $4$ in Lemma \ref{properties} we find five monotonicity regions in the phase plane $\Theta_1$ denoted by $\Lambda_1,\dots,\Lambda_5$ (see Figure \ref{PlanosFasesCaso2} left). For $\varepsilon=-1$, the phase plane $\Theta_{-1}$ is exactly the same that in the previous case $\lambda>\sqrt{5}/2$. 

From Corollary \ref{ejefase} we can assert that there exists a unique orbit $\gamma_+$ (resp. $\gamma_-$) in $\Theta_1$ with $(0,1)$ (resp. $(0,-1)$) as an endpoint.

Regarding the orbit $\gamma_+$, it converges to $e_0=(\mathrm{argtanh}\frac{1}{\sqrt{5}},0)$ as $s\rightarrow\infty$ spiraling around it infinitely many times in the same fashion as the orbit $\gamma_+$ studied in the previous case $\lambda>\sqrt{5}/2$, see Figure \ref{PlanosFasesCaso2} left. Consequently, its corresponding $\hl$-surface $\sig_+$ has the same behavior as the one shown in Figure \ref{PlanosFasesCaso1orb2}, top right.

%Firstly, we analyze the motion of $\gamma_+$ and we start by assuming that $\gamma_+(0)=(0,1)$ in $\Theta_1$. Now, the same reasoning that the one used in the case $\lambda>\sqrt{5}/2$ allows us to know that $\gamma_+(s)$ intersects the axis $y=0$ orthogonally at $(x_+,0)$ with $x_+>\arg\tanh\left(\frac{1}{\sqrt{5}}\right)$ at some finite instant $s_+>0$. Later on, $\gamma_+$ enters to $\Lambda_2$ and it remains there until intersecting the curve $\Gamma_1$, then $\gamma_+$ enters to $\Lambda_4$ and after that enters to $\Lambda_5$ and so on. As $\gamma_+$ cannot converge to any closed orbit in virtue of Theorem \ref{noorbitascerradas}, $\gamma_+$ ends up converging to $e_0$ as $s\rightarrow\infty$, spiriling around it infitely many times (see $\gamma_+$ in Figure \ref{PlanosFasesCaso2} left). Consequently, the generated surface $\Sigma_+$ is a properly embedded, simply connected $\hl$-surface which converges to $C_\lambda$ intersecting it infinitely many times. See again Figure \ref{PlanosFasesCaso1orb2} top-right.
%\blue{The structure of the orbits in $\tm1$ is the same as in the case $\lambda>\sqrt{5}/2$. They are compact arcs joining two points located at the lines $y=\pm1$.}

Now, consider the orbit $\gamma_-$ such that $\gamma_-(0)=(0,-1)$. It is clear that $\gamma_-(s)$ is totally contained in $\Lambda_3$ and when the parameter $s$ tends to $-\infty$, the orbit $\gamma_-(s)$ converges to the line $y=-2/\sqrt{5}$. Note that $\gamma_-(s)$ cannot converge to other line $y=y_0,\ y_0\in(-1,-2/\sqrt{5})$ in virtue of Proposition \ref{orbitaescapa} (see the plot of $\gamma_-$ in Figure \ref{PlanosFasesCaso2} left). Therefore, the generated $\hl$-surface $\sig_-$ is an entire, strictly convex graph whose angle function tends to the value $-2/\sqrt{5}$. See Figure \ref{PlanosFasesCaso2} right.

 \begin{figure}[H]
	\centering
	\includegraphics[width=.8\textwidth]{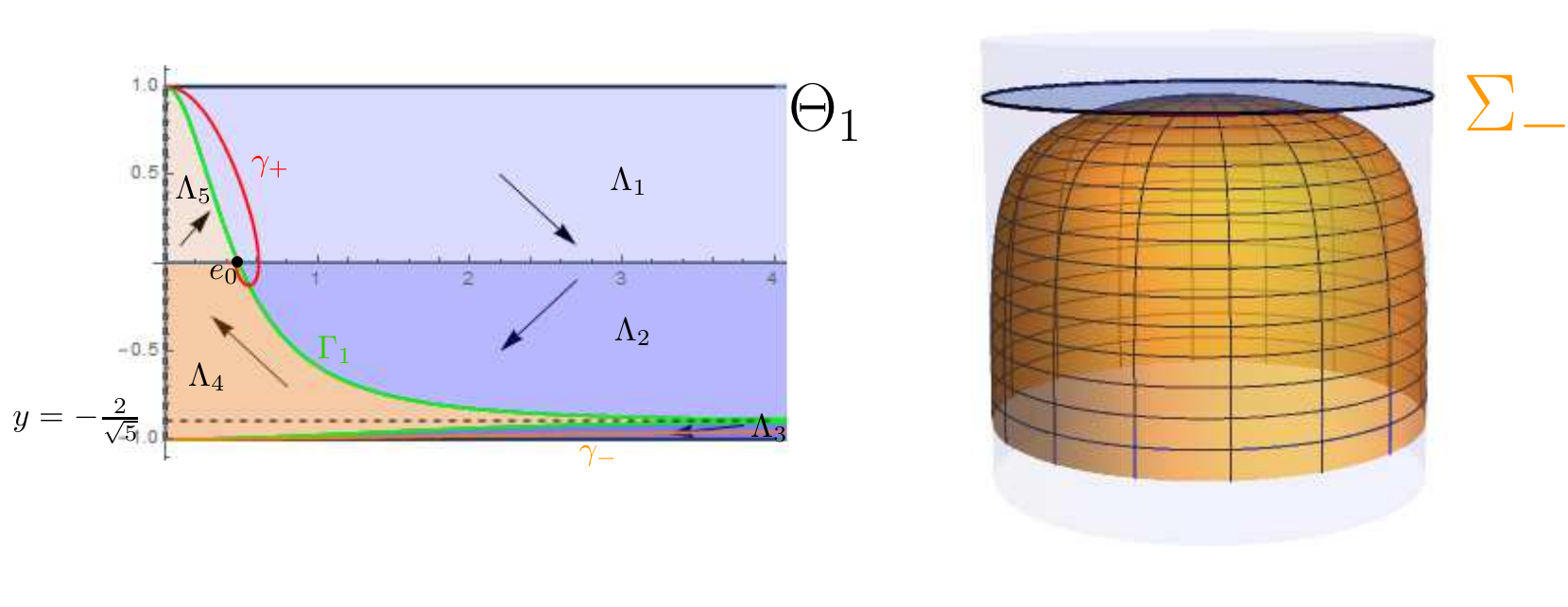}
	\caption{Left: the phase plane $\Theta_1$ for $\lambda=\sqrt{5}/2$ and the orbits $\gamma_+$ and $\gamma_-$, plotted in red and orange, respectively. Right: the corresponding rotational $\hl$-surface $\Sigma_-$.}
	\label{PlanosFasesCaso2}
\end{figure}

{\underline{Case $\lambda<\sqrt{5}/2$}}

We begin by analyzing the behavior of $\Gamma_\varepsilon$ in $\te$. Recall that the equilibrium $e_0$ exists in $\t1$ if and only if $\lambda>1/2$. Additionally, we define the candidates of asymptotes for $\Gamma_\epsilon$ as: 
 \[y_0^+:=\frac{1}{5}(-4\lambda+\sqrt{5-4\lambda^2}),\quad\textrm{and}\quad y_0^-:=\frac{1}{5}(-4\lambda-\sqrt{5-4\lambda^2}).\] 
 Let us distinguish further cases of $\lambda$:
\begin{itemize}
\item[1.] If $\lambda>1$, then $\Gamma_1$ is a disconnected arc having two connected components $\Gamma_1^+$ and $\Gamma_1^-$, with $\Gamma_+$ (resp. $\Gamma_-$) having the point $(0,1)$ (resp. $(0,-1)$) as an endpoint and the line $y=y_0^+$ (resp. $y=y_0^-$) as asymptote; see Figure \ref{FasesLambdaMenorSqrt52}. The curve $\Gamma_{-1}$ does not exist in $\tm1$ as in the previous cases.
 \begin{figure}[H]
	\centering
	\includegraphics[width=.4\textwidth]{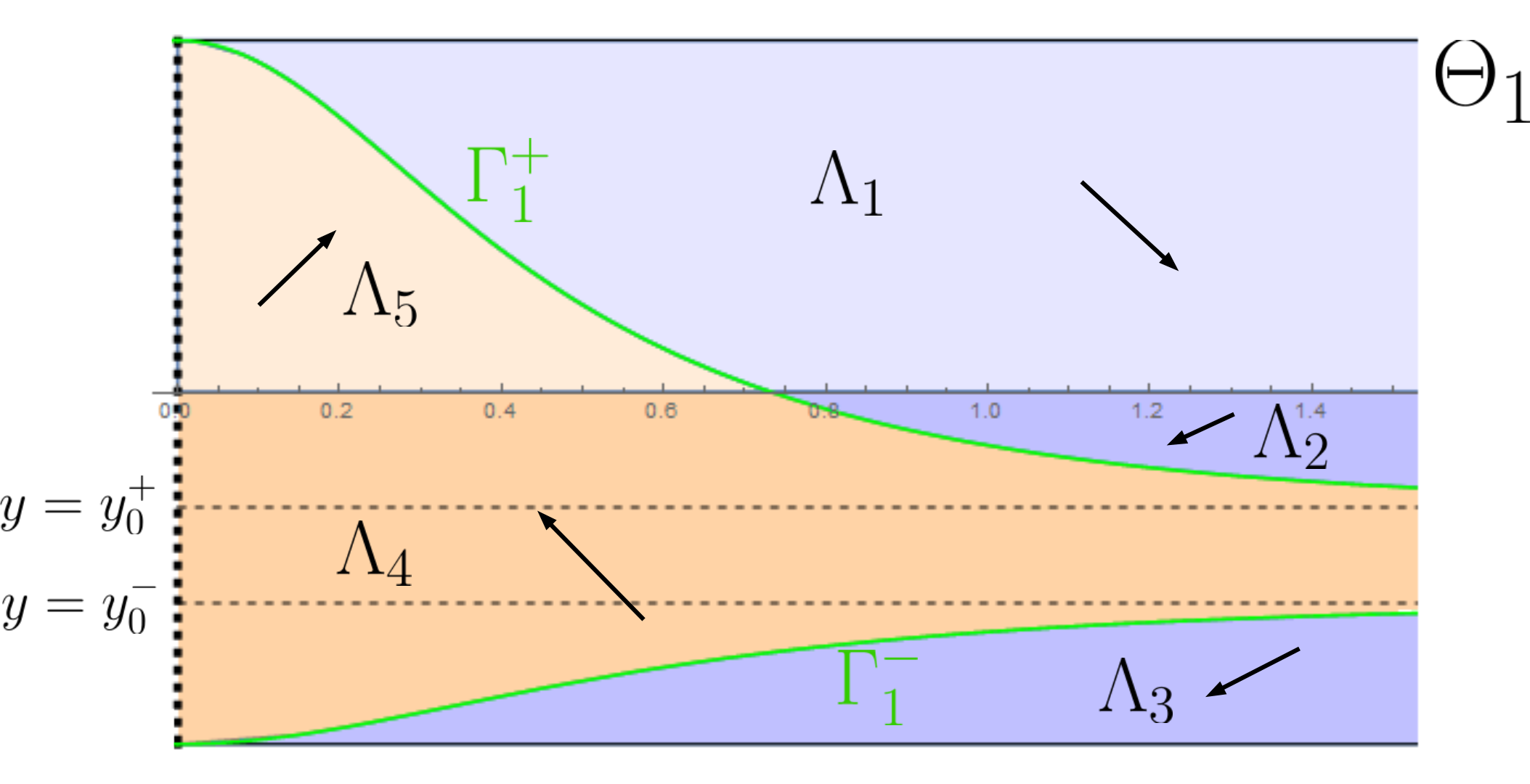}
	\caption{The phase plane $\t1$ for some $\lambda\in(1,\sqrt{5}/2)$.}
	\label{FasesLambdaMenorSqrt52}
\end{figure}
\item[2.] If $\lambda=1$, then $\Gamma_1$ is a connected arc having $(0,1)$ as endpoint and the line $y=y_0^+=-3/5$ as asymptote. The line $y=y_0^-=-1$ does not appear. The curve $\Gamma_{-1}$ does not exist in $\tm1$.
\item[3.] If $\lambda<1$, then $\Gamma_1$ is a connected arc having $(0,1)$ as endpoint and the line $y=y_0^+$ as asymptote. Moreover, $y_0^+\geq0$ if and only if $\lambda\leq1/2$. The curve $\Gamma_{-1}$ is a connected arc having $(0,-1)$ as endpoint and the line $y=y_0^-$ as an asymptote; see Figure \ref{FasesLambdaMenorSqrt521}.
 \begin{figure}[H]
	\centering
	\includegraphics[width=.8\textwidth]{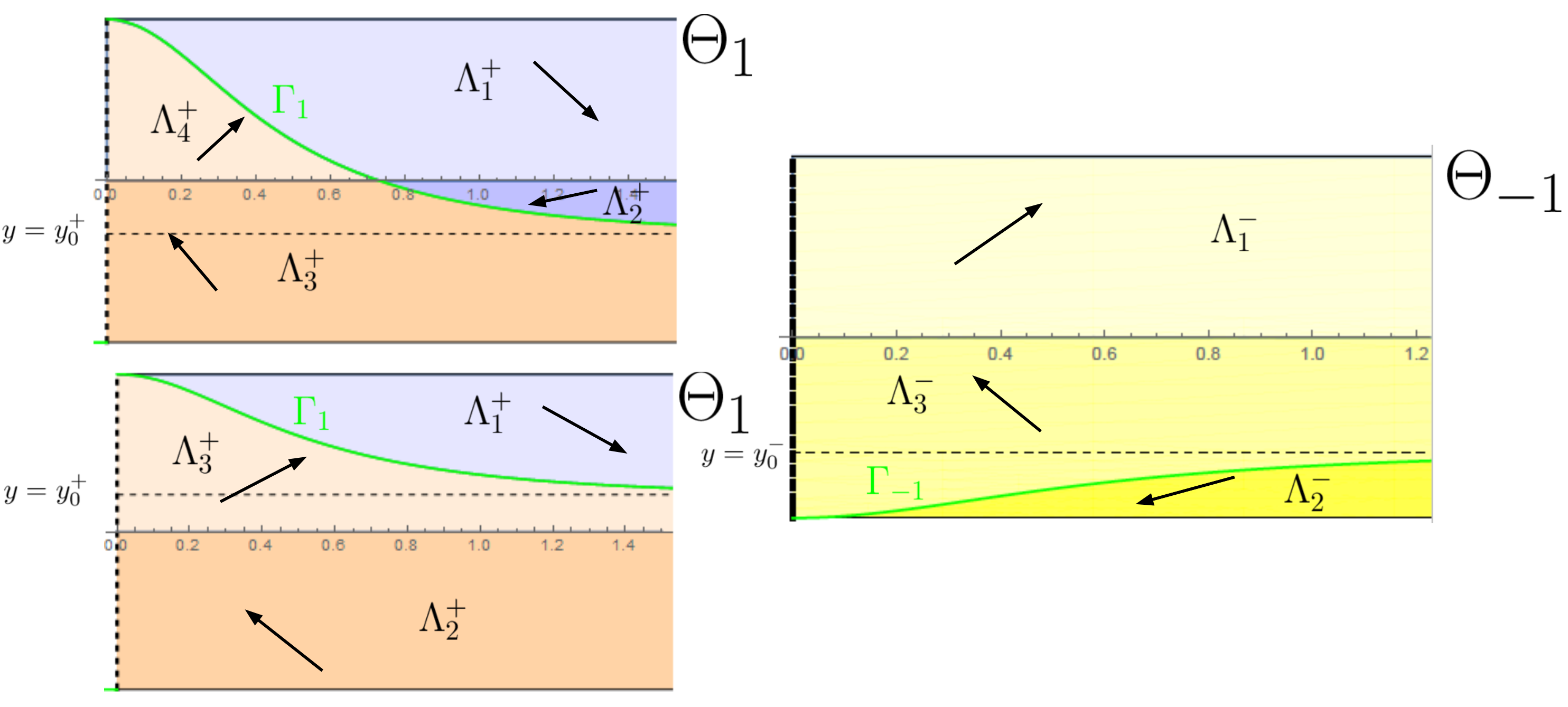}
	\caption{Top left: the phase plane $\Theta_1$ for $\lambda >1/2$. Bottom left: the phase plane $\Theta_1$ for $\lambda\leq1/2$. Right:  the phase plane $\Theta_{-1}$ for $\lambda<1$. }
	\label{FasesLambdaMenorSqrt521}
\end{figure}
\end{itemize}

%\blue{. For $\varepsilon=1$, $\Gamma_1$ is a connected arc with endpoint at $(0,1)$ and converging to the line $y=1/5(-4\lambda+\sqrt{5-4\lambda^2})$. As a matter of fact, the lowest value for this line is attained at $\lambda=\sqrt{5}/2$, whose value is precisely $y=-2\sqrt{5}$. For $\varepsilon=-1$, $\Gamma_{-1}$ is a connected arc with endpoint at $(0,-1)$ and converging to the line $y=1/5(-4\lambda-\sqrt{5-4\lambda^2})$.
%This time, the lower value for this line is attained at $\lambda=1$, whose value is precisely $y=-1$. In this critical point, the curve $\Gamma_{-1}$ does not even exist. See Figure XXX.}

Now, we study the behavior of the orbits. Once again, the existence of the orbit $\gamma_+(s)$ in $\t1$ such that $\gamma(0)=(0,1)$ follows from Corollary \ref{ejefase}. Then, if we suppose that $\lambda\leq 1/2$, $\gamma_+(s)$ stays in $\Lambda_1^+$ as it converges to the line $y=y_0^+$, and so $\sig_+$ is an entire, strictly convex graph. Otherwise, i.e., if $\lambda>1/2$, the equilibrium $e_0$ exists. Since no closed orbit exists in virtue of Theorem \ref{noorbitascerradas}, $\gamma_+$ converges to $e_0$ as $s\rightarrow\infty$, and its behavior is detailed in Lemma \ref{comportamientoequilibrio}. Consequently, $\sig_+$ is a properly embedded, simply connected $\hl$-surface and:
\begin{itemize}
	\item[$\bullet$] If $\displaystyle{\lambda>\frac{\sqrt{2}}{2}}$, $\sig_+$ intersects $C_\lambda$ infinitely many times.	
	\item[$\bullet$] If $\displaystyle{\lambda=\frac{\sqrt{2}}{2}}$, $\sig_+$ intersects $C_\lambda$ a finite number of times.
	\item[$\bullet$] If $\displaystyle{\lambda<\frac{\sqrt{2}}{2}}$, $\sig_+$ is a strictly convex graph contained in the solid cylinder bounded by $C_\lambda$ and converging asymptotically to it.
\end{itemize}

For the study of the orbit $\gamma_-(s)$ such that $\gamma_-(0)=(0,-1)$ we have to distinguish between the cases $\lambda>1,\ \lambda=1,\ \lambda<1$. This discussion will deeply influence the outcome of Corollary \ref{ejefase}:
\begin{itemize}
\item[1.] If $\lambda>1$, then $\hl(-1)>0$ and $\gamma_-(s)$ lies in $\t1$.
\item[2.] If $\lambda=1$, then $\hl(-1)=0$ and $\gamma_-(s)$ does not exist in either $\t1$ or $\tm1$.
\item[3.] If $\lambda<1$, then $\hl(-1)<0$ and $\gamma_-(s)$ lies in $\tm1$.
\end{itemize}

If $\lambda=1$, horizontal minimal planes downwards oriented are $\hh_1$-surfaces. Consequently, the uniqueness of the Cauchy problem of \eqref{1ordersys} extends to the line $y=-1$ and so no orbit can have and endpoint at this line.

%The situation in case $\lambda=1$ is that horizontal minimal planes $\mathbb{H}^2\times\{z_0\}$ with downwards orientation are $\hh_1$-surfaces. Since the prescribed function $\hh_1(y)=y+1$ is of class $C^1$ (in fact, it is analytic), the uniqueness of the Cauchy problem of \eqref{1ordersys} extends to the boundary line $y=-1$. This implies that no orbit can have an endpoint located at this line. In some sense, the line $y=-1$ itself acts as an orbit in the boundary of $\te,\ \varepsilon=\pm1$, and this orbit shall be identified with a horizontal plane with downwards orientation.

If $\lambda\neq1$, by monotonicity and Proposition \ref{orbitaescapa}, the only possibility for $\gamma_-(s)$ is to converge to the line $y=y_0^-$, and so $\sig_-$ is a downwards oriented, strictly convex, entire graph. For $\lambda>1$, the height of $\sig_-$ tends to minus infinity; for $\lambda<1$, the height of $\sig_-$ tends to infinity.

This concludes the classification of the rotational $\hl$-surfaces that intersect the axis of rotation.
\end{prooft1}

To finish, we prove Theorem \ref{teoremafueraeje}. 

\begin{prooft2} First, the equilibrium $e_0=(\mathrm{argtanh}\frac{1}{2\lambda},0)$ exists if and only if $\lambda>\frac{1}{2}$. This equilibrium generates the cylinder $C_\lambda$ with CMC equal to $\lambda$ and vertical rulings. For the remaining $\hl$-surfaces, we distinguish again three cases depending on $\lambda$. Take into account that the structure of the phase planes $\Theta_1$ and $\Theta_{-1}$ has been just studied in the previous proof. See Figure \ref{PlanosFasesCaso1} for  $\lambda>\sqrt{5}/2$, Figure \ref{PlanosFasesCaso2} for $\lambda=\sqrt{5}/2$ and Figures \ref{FasesLambdaMenorSqrt52} and \ref{FasesLambdaMenorSqrt521} for $\lambda<\sqrt{5}/2$.
	
 {\underline{Case $\lambda>\sqrt{5}/2$}}
	
Let us take $x_0>\arg\tanh(\frac{1}{2\lambda})$ and let $\gamma(s)$ be the orbit in $\Theta_1$ passing through the point $(x_0,0)$ at the instant $s=0$. For $s<0$, $\gamma$ has as endpoint some $(x_1,1),\ x_1>0$ (if $x_1=0$, $\gamma=\gamma_+$), and for $s>0$ either converges to $e_0$ as $s\rightarrow\infty$ or has another endpoint of the form $(x_2,-1),\ x_2>0$ (again, if $x_2=0$, $\gamma=\gamma_-$). In the second case, the orbit $\gamma$ continues in $\Theta_{-1}$ as a compact arc and then goes again in $\Theta_1$; see Figure \ref{LambdaMayorSqrt52}, left. After a finite number of iterations, the orbit $\gamma$ eventually converges to $e_0$ spiraling around it infinitely many times.

 \begin{figure}[H]
	\centering
	\includegraphics[width=.64\textwidth]{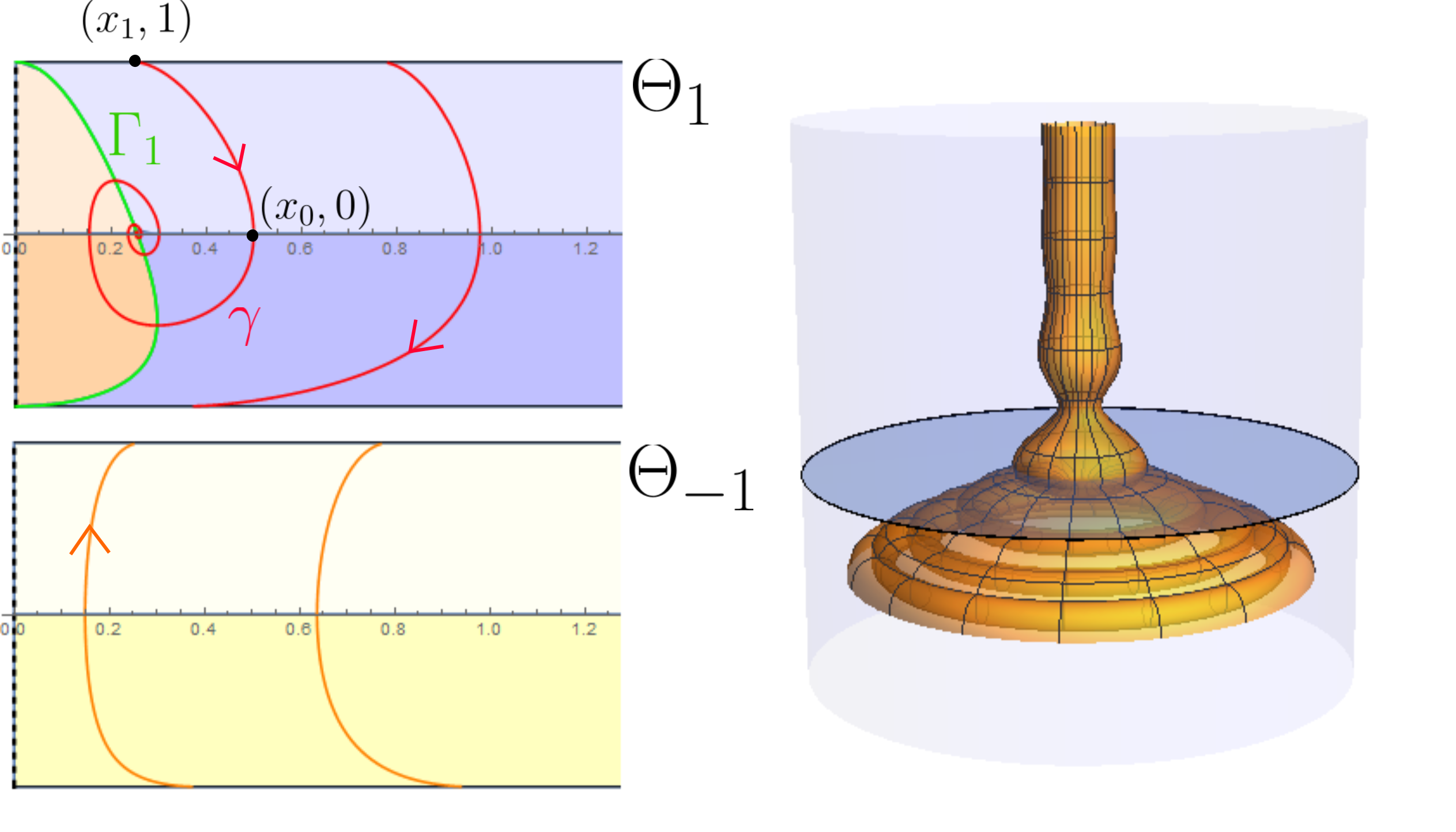}
	\caption{Left: the phase planes $\Theta_1$ and $\Theta_{-1}$ for $\lambda>\sqrt{5}/2$ and an orbit $\gamma$. Right: the rotational $\hl$-surface corresponding to $\gamma$.} 
	\label{LambdaMayorSqrt52}
\end{figure}

This configuration ensures us that the $\hl$-surface generated by $\gamma$ is properly immersed, non-embedded, and diffeomorphic to $\S^1\times\R$. One end converges to $C_\lambda$ intersecting it infinitely many times, and the other end has unbounded distance to the axis of rotation, looping and self-intersecting infinitely many times (see Figure \ref{LambdaMayorSqrt52}, right).

{\underline{Case $\lambda=\sqrt{5}/2$}}

Firstly, let us fix some $x_0>\arg\tanh(\frac{1}{\sqrt{5}})$ and consider the orbit $\gamma_1(s)$ passing through $(x_0,0)$ at $s=0$. For $s<0$, $\gamma_1(s)$ is contained in $\Lambda_1$, so it satisfies $\gamma_1(s_1)=(x_1,1)$ as endpoint for some $s_1<0$. Hence, for $s<s_1$, $\gamma_1(s)$ lies in $\tm1$ and is a compact arc whose other endpoint is located at some $\gamma_1(s_2)=(x_2,-1)$. Finally, for $s<s_2$, $\gamma_1(s)$ lies in the monotonicity region $\Lambda_3$ in $\Theta_1$ and stays there as it converges to the line $y=-2/\sqrt{5}$ as $s\rightarrow-\infty$; see Figure \ref{LambdaIgualSqrt52}, top left, the red orbit.

For $s>0$, $\gamma_1(s)$ enters to $\Lambda_2$, then goes inside $\Lambda_4$ and intersects $y=0$ for the second time in some $(\widehat{x}_0,0)$ with $\widehat{x}_0>0$. It is clear that when $x_0$ increases, then $\widehat{x}_0$ decreases and so $\widehat{x}_0\rightarrow x_\infty\geq0$ as $x_0\rightarrow\infty$. Also, note that $\gamma_1(s)$ stays always above the line $y=-2/\sqrt{5}$ since the minimum of its $y(s)$-coordinate is at the intersection of $\gamma_1(s)$ with $\Gamma_1$. In this setting, we claim that $x_\infty>0$.

Arguing by contradiction, suppose that $x_\infty=0$ and consider an orbit $\sigma(s)$ such that $\sigma(0)$ lies in $\Lambda_4$ and is located below the line $y=-2/\sqrt{5}$. Then, for $s>0$  in virtue of Proposition \ref{endpoints} and by monotonicity, $\sigma(s)$ has to reach the axis $y=0$ at a point $(\widehat{r}_1,0)$ for some finite instant. Due to the definition of $x_\infty$ and the assumption $x_\infty=0$, there exists $r>e_0$ and an orbit $\gamma$ such that $\gamma(0)=(r,0)$ and $\gamma(s_1)=(\widehat{r},0)$ with $0<\widehat{r}<\widehat{r}_1$, for some $s_1>0$. Hence, $\sigma$ and $\gamma$ intersect each other, which is a contradiction with the uniqueness of the Cauchy problem.

Secondly, let be $r_0>0$ and take $\gamma_2(s)$ the orbit such that $\gamma_2(0)=(r_0,-\sqrt{2}/5)$. For $s<0$, $\gamma_2(s)$ lies in $\Lambda_4$, intersects the curve $\Gamma_1^-$, enters $\Lambda_3$ and converges to the line $y=-2/\sqrt{5}$. For $s>0$, $\gamma_2(s)$ lies in $\Lambda_4$ until intersecting $y=0$ at some $(\tilde{r_0},0)$. Moreover, as $r_0$ increases $\tilde{r_0}$ also increases, and so $\tilde{r_0}\rightarrow r_\infty$ as $r_0\rightarrow\infty$. In particular, $r_\infty\leq x_\infty$. For $s\rightarrow\infty$, $\gamma_2(s)$ converges asymptotically to $e_0$, spiraling around it infinitely many times; see Figure \ref{LambdaIgualSqrt52}, top left, the blue orbit.

Finally, take some $\xi_0\in[r_\infty,x_\infty]$ and let $\gamma_3(s)$ be the orbit such that $\gamma_3(0)=(\xi_0,0)$. For $s>0$ it is clear that $\gamma_3(s)$ converges asymptotically to $e_0$, spiraling around it infinitely many times. Because of how $r_\infty$ and $x_\infty$ have been defined, for $s<0$ the orbit $\gamma_3(s)$ cannot intersect $\Gamma_1^-$ nor intersect $y=-2/\sqrt{5}$. Thus, the only possibility for $\gamma_3(s)$ is to converge to the line $y=-2/\sqrt{5}$ with strictly decreasing $y(s)$-coordinate; see Figure \ref{LambdaIgualSqrt52}, top left, the purple orbit.

Thus, each orbit $\gamma_i,\ i=1,2,3$ generates a properly immersed $\hl$-surface $\sig_i$, diffeomorphic to $\S^1\times\R$, with one end converging asymptotically to the CMC cylinder $C_\lambda$ and the other being a graph outside a compact set. Moreover, $\sig_1$ is non-embedded, while $\sig_2$ and $\sig_3$ have monotonous height and in particular are embedded; see Figure \ref{LambdaIgualSqrt52}, bottom.

 \begin{figure}[H]
	\centering
	\includegraphics[width=1\textwidth]{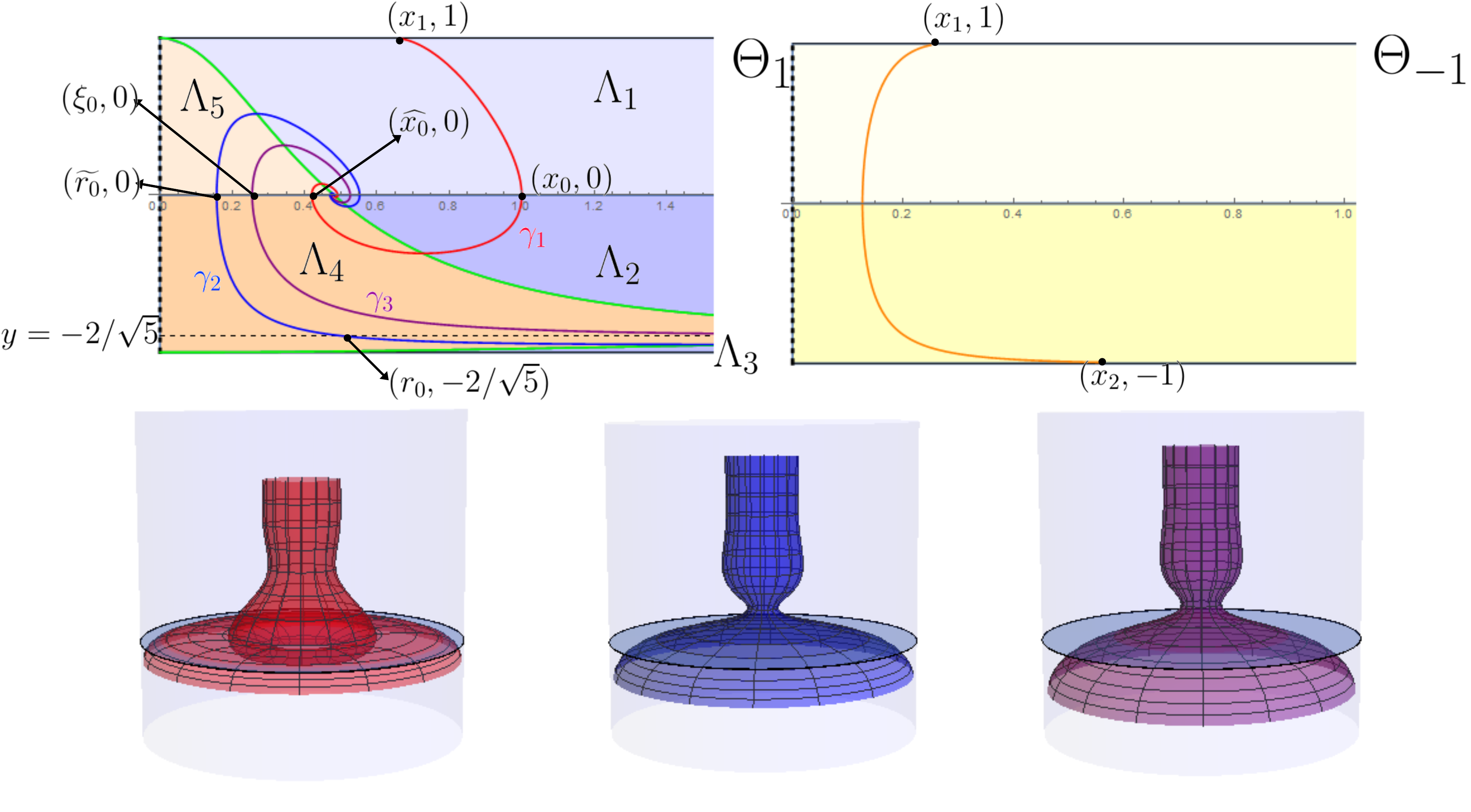}
	\caption{Top: the phase planes $\Theta_1$ and $\Theta_{-1}$ for $\lambda=\sqrt{5}/2$ and the three types of orbits $\gamma_1$, $\gamma_2$ and $\gamma_3$. Bottom: the three corresponding types of rotational $\hl$-surfaces.}
	\label{LambdaIgualSqrt52}
\end{figure}

{\underline{Case $\lambda<\sqrt{5}/2$}}

In this final case, we also distinguish between values of $\lambda$. Note that the equilibrium $e_0=(\arg\tanh(\frac{1}{2\lambda}),0)$ exists if and only if $\lambda>1/2$. Again, we define  \[y_0^+:=\frac{1}{5}(-4\lambda+\sqrt{5-4\lambda^2}),\quad\textrm{and}\quad y_0^-:=\frac{1}{5}(-4\lambda-\sqrt{5-4\lambda^2}).\] 
	
\begin{itemize}
\item[1.] \underline{Case $1\leq\lambda<\sqrt{5}/2$}. The structure of the phase plane $\t1$ (resp. $\Theta_{-1}$) is the same as the one in Figure \ref{FasesLambdaMenorSqrt52} (resp. Figure \ref{PlanosFasesCaso1}, right). Recall that $y_0^-=-1$ in $\t1$ for $\lambda=1$ and there are only four monotonicity regions. The same reasoning as in the case $\lambda=\sqrt{5}/2$ ensures us that we can construct three types of orbits (see Figure  \ref{LambdaIgualSqrt52}) and also the points $x_\infty$ and $r_\infty$:

\begin{itemize}
\item[$\bullet$] $\gamma_1(s)$ such that $\gamma_1(0)=(x_0,0)$ with $x_0>\arg\tanh(\frac{1}{2\lambda})$ and $\gamma_1(s)\rightarrow e_0$ as $s\rightarrow\infty$.
\item[$\bullet$] $\gamma_2(s)$ such that $\gamma_2(0)=(r_0,y_0^+)$ with $r_0>0$, $\gamma_2(s)\rightarrow e_0$ as $s\rightarrow\infty$ and $\gamma_2(s)\rightarrow y_0^-$ as $s\rightarrow-\infty$.
\item[$\bullet$] $\gamma_3(s)$ such that $\gamma_3(0)=(\xi_0,0)$ with $\xi_0\in[r_\infty,x_\infty]$, $\gamma_3(s)\rightarrow e_0$ as $s\rightarrow\infty$ and $\gamma_3(s)\rightarrow y_0^+$ as $s\rightarrow-\infty$.
\end{itemize}
Again, each orbit $\gamma_i,\ i=1,2,3$ generates a properly immersed $\hl$-surface $\sig_i$ that is diffeomorphic to $\S^1\times\R$, with one end converging asymptotically to the CMC cylinder $C_\lambda$ and the other being a graph outside a compact set. Moreover, $\sig_1$ self-intersects, while $\sig_2$ and $\sig_3$ have monotonous height and in particular are embedded.

%Take some $x_0>\arg\tanh(\frac{1}{2\lambda})$ and consider the orbit $\gamma_1(s)$ in $\t1$ such that $\gamma_1(0)=(x_0,0)$. Then, arguing as in the case $\lambda=\sqrt{5}/2$, for $s>0$ the orbit $\gamma_1(s)$ enters to $\Lambda_2$, intersects $\Gamma_1^+$, enters to $\Lambda_4$ and intersects $y=0$ at some $(\widehat{x}_0,0)$ before entering to $\Lambda_5$, and finally converges asymptotically to $e_0$. The same argument as in the case $\lambda=\sqrt{5}/2$ ensures us that $\widehat{x}_0\rightarrow x_\infty>0$ as $x_0\rightarrow\infty$. For $s<0$, $\gamma_1(s)$ lies in $\Lambda_1$ and has as endpoint some $\gamma_1(s_1)=(x_1,1),\ s_1<0$. For $s<s_1$, the orbit $\gamma_1(s)$ lies in $\tm1$ and hence joins $(x_1,1)$ with some $\gamma_1(s_2)=(x_2,-1)$. Finally, for $s<s_2$, $\gamma_1(s)$ lies in $\Lambda_3$ and converges to the line $y=y_0^-$ as $s\rightarrow-\infty$.
%
%At this point, we repeat the argument exhibited in the case $\lambda=\sqrt{5}/2$. We choose $r_0>0$ and $\gamma_2(s)$ the orbit in $\t1$ such that $\gamma_2(0)=(r_0,y_0^+)$. For $s<0$, $\gamma_2(s)$ converges to the line $y=y_0^-$. For $s>0$, $\gamma_2(s)$ lies in $\Lambda_4$ until intersecting $y=0$ at some $(\tilde{r_0},0)$. Again, $\tilde{r_0}\rightarrow r_\infty\leq x_\infty$ as $r_0\rightarrow\infty$.
%
%Finally, for $\xi_0\in[r_\infty,x_\infty]$ the orbit $\gamma_3(s)$ such that $\gamma_3(0)=(\xi_0,0)$ converges to $e_0$ as $s\rightarrow\infty$, lies above the line $y=y_0^+$ and converges to it as $s\rightarrow-\infty$.

\item[2.] \underline{Case $1/2<\lambda<1$}. In this case, the structure of $\Theta_1$ and $\Theta_{-1}$ is shown in Figure \ref{FasesLambdaMenorSqrt521} top left and right respectively. There are also three kind of orbits $\gamma_1(s),\gamma_2(s),\gamma_3(s)$ in $\t1$, and $\gamma_1(s)$ and $\gamma_3(s)$ behave as shown in Figure \ref{LambdaIgualSqrt52}. The only difference here is that the orbit $\gamma_2(s)$ intersects the line $y=-1$ at a finite point as $s$ decreases. Then, $\gamma_2(s)$ enters to $\tm1$ and converges to the line $y=y_0^-$ as $s\rightarrow-\infty$.

The corresponding $\hl$-surfaces are also similar to the ones constructed in the previous case.
%
%Again, each orbit $\gamma_i,\ i=1,2,3$ generates properly immersed $\hl$-surface $\sig_i$ that is diffeomorphic to $\S^1\times\R$, with one end converging asymptotically to the CMC cylinder $C_\lambda$ and the other being a graph outside a compact set. Moreover, $\sig_1$ is has a self-intersection, while $\sig_2$ and $\sig_3$ are always embedded.

%This case is quite similar to the one when $\lambda\in(1/2,1)$. The discussions are the same except for the orbits of the second kind $\gamma_2(s)$. Indeed, take some $(r_0,y_0)$ at the line $y=y_0^+$ and $\gamma_2(s)$ the orbit such that $\gamma_2(0)=(r_0,y_0)$. This time, for $s<0$ the orbit $\gamma_2(s)$ does not reach the line $y=-1$. As argued in item 2 for the case $\lambda=1$ in the proof of Theorem \ref{teoremasaleeje}, the uniqueness of the Cauchy problem of \ref{1ordersys} extends to the line $y=-1$. Therefore, $\gamma_2(s)$ converges to $y=-1$ when $s\rightarrow-\infty$. The rest of the orbits behave as either $\gamma_1(s)$ or $\gamma_3(s)$, and so their corresponding $\hl$-surfaces. Details are skipped.

\item[3.] \underline{Case $\lambda\leq 1/2$}. In this case, the structure of $\Theta_1$ and $\Theta_{-1}$ is shown in Figure \ref{FasesLambdaMenorSqrt521} bottom left and right respectively. In particular, no equilibrium point exists and the behavior of the orbits is different from the previous cases.

 \begin{figure}[H]
	\centering
	\includegraphics[width=.9\textwidth]{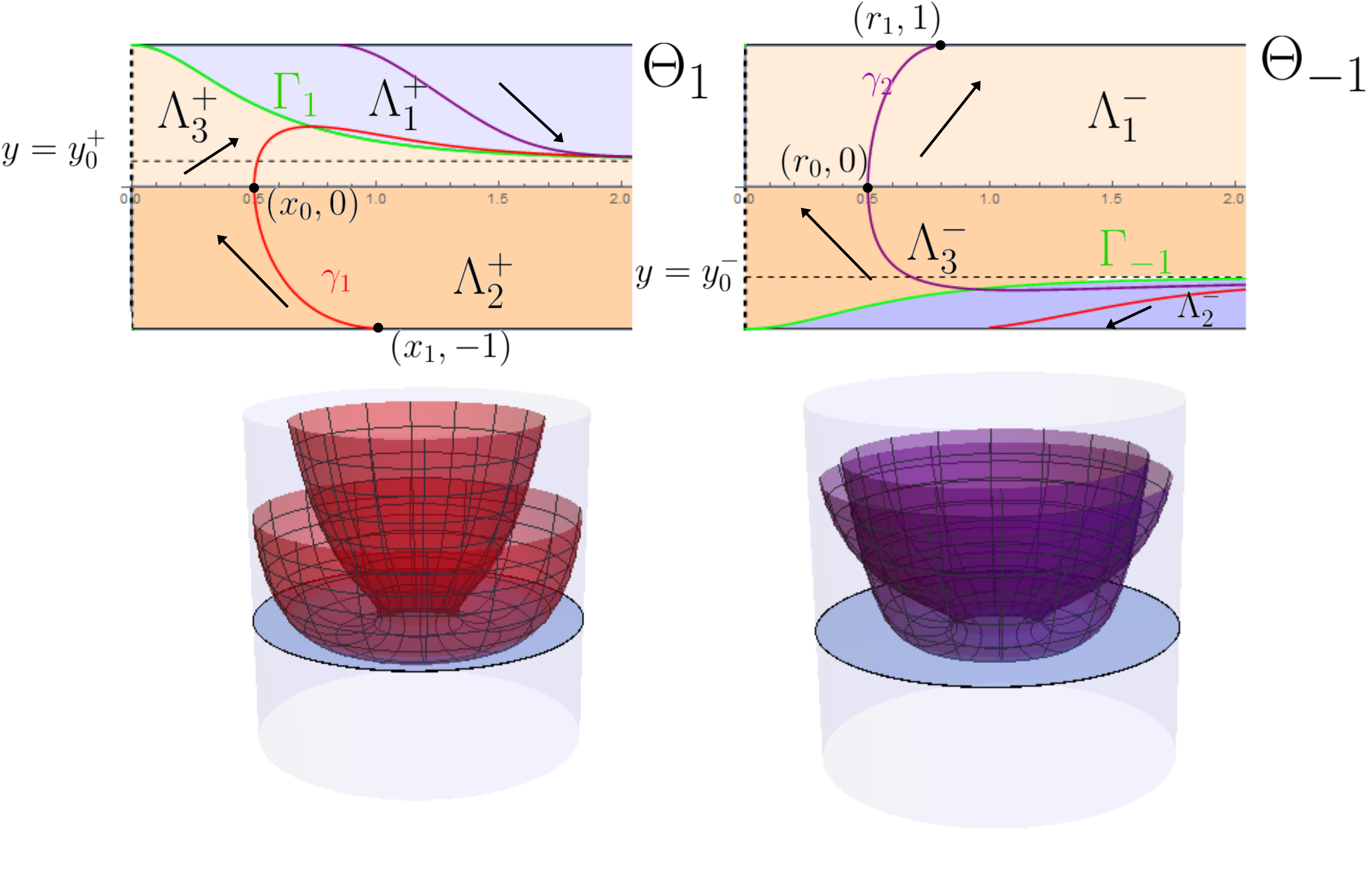}
	\caption{Top: the phase planes $\Theta_1$ and $\Theta_{-1}$ for $\lambda=1/3$. Bottom: the two corresponding $\hl$-surfaces for $\lambda=1/3$.}
	\label{LambdaMenorUnMedio}
\end{figure}

First, let be $x_0>0$ and $\gamma_1(s)$ the orbit in $\t1$ such that $\gamma_1(0)=(x_0,0)$. For $s>0$, $\gamma_1(s)$ enters to $\Lambda_3^+$, intersects $\Gamma_1$ and then lies in $\Lambda_1^+$ converging to the line $y=y_0^+$. For $s<0$, the orbit $\gamma_1(s)$ lies in $\Lambda_2^+$ and has some $\gamma_1(s_0)=(x_1,-1)$ as endpoint. Thus, $\gamma_1(s)$ for $s<s_0$ lies in $\Lambda_2^-$ and stays there converging to the line $y=y_0^-$ as $s\rightarrow-\infty$. See Figure \ref{LambdaMenorUnMedio}, the orbit in red.

Lastly, let be $r_0>0$ and $\gamma_2(s)$ the orbit in $\tm1$ such that $\gamma_2(0)=(r_0,0)$. For $s<0$, $\gamma_2(s)$ enters the region $\Lambda_3^-$ and ends up converging to the line $y=y_0^-$ as $s\rightarrow-\infty$. For $s>0$, $\gamma(s)$ enters the region $\Lambda_1^-$ and has as endpoint some $\gamma_2(s_0)=(r_1,1)$. Then, $\gamma_2(s)$ for $s>s_0$ lies in $\Lambda_1^+$ and stays there as it converges to the line $y=y_0^+$ as $s\rightarrow\infty$. See Figure \ref{LambdaMenorUnMedio}, the orbit in purple.

Again, we have two distinct $\hl$-surfaces $\sig_1$ and $\sig_2$ generated by the orbits $\gamma_1$ and $\gamma_2$. Each $\sig_i$ is properly immersed and diffeomorphic to $\S^1\times\R$, and both ends are graphs outside compact sets. Moreover, $\sig_1$ is embedded, while $\sig_2$ self intersects; see Figure \ref{LambdaMenorUnMedio}, bottom.
\end{itemize}
\end{prooft2}
% \begin{figure}[H]
%	\centering
%	\includegraphics[width=.7\textwidth]{HsupMenorUnMedio2.png}
%	\caption{The two distinct $\hl$-surfaces for $\lambda=1/3$. In red, $\sig_1$; in purple, $\sig_2$.}
%	\label{HsupLambdaMenorUnMedio}
%\end{figure}

\def\refname{References}

\end{document}